\documentclass{amsart}
\usepackage{amsfonts}
\usepackage{graphicx}
\usepackage{amscd}
\usepackage{amsmath}
\usepackage{amssymb}
\usepackage{hyperref}
\usepackage{amsthm}
\usepackage[latin1]{inputenc}
\setcounter{MaxMatrixCols}{30}
\theoremstyle{plain}

\newtheorem{corollary}{\bf Corollary}[section]

\newtheorem{lemma}{\bf Lemma}

\newtheorem{proposition}{\bf Proposition}[section]
\newtheorem{remark}{Remark}[section]

\newtheorem{theorem}{\bf Theorem}[section]
\newtheorem*{theorem*}{Theorem}
\newtheorem{conj}{Conjecture}
\newtheorem{qu}{Question}
\newtheorem*{qu*}{Question}
\numberwithin{equation}{section}

\newcommand\dM{\mathrm{dM}}

\newcommand\dm{\mathrm{dm}}
\newcommand\h{\mathcal{H}}

\usepackage{color}

\begin{document}

\title[]{On the spectra of geometric operators evolving with geometric flows}
\author{R. R. Mesquita$^1$ $\&$ D. M. Tsonev$^2$}
%\author{R. R. Mesquita$^1$}
\address{$^{1,2}$Departamento de Matem\'atica-UFAM, 69077-070-Manaus-AM-BR}
%\address{$^{2}$Departamento de Matem\'atica-UFAM, 69077-070-Manaus-AM-BR}
%\email{$^2$marcusmarrocos@gmail.com}
\email{$^{1}$mesquitaraul532@gmail.com}
\email{$^{2}$dragomir\_tsonev@yahoo.co.uk}
%\urladdr{$^2$http://cmcc.ufabc.edu.br}
%\thanks{$^{1,2}$Partially supported by PRONEX/CNPq/FAPEAM-BR}
%\thanks{$^{2}$Partially supported by PRONEX/CNPq/FAPEAM-BR}
\keywords{Witten-Laplacian, Eigenvalues, Monotonicity of eigenvalues, Ricci flow,  Ricci-Bourguignon flow, Yamabe flow, Bochner formula, Reilly formula}
%\subjclass[2000]{Primary 53C23, 53C44, 53C80; Secondary 44A55; 47A75}
\urladdr{$^{1,2}$http://www.mat.ufam.edu.br}
%\urladdr{$^{2}$http://www.mat.ufam.edu.br}
%\date{14 jan, 2015}

\begin{abstract} 
In this work we generalise various recent results on the evolution and monotonicity of the eigenvalues of certain geometric operators under specified geometric flows. Given a closed, compact Riemannian manifold $\big(M^n,g(t)\big)$ and a smooth function $\eta\in C^{\infty}(M)$ we consider the family of operators $\mathbb{L}=\Delta - g(\nabla\eta,\nabla\cdot)+cR$, where $R$ is the scalar curvature and $c$ is some real constant. We define a geometric flow on $M$ which encompasses the Ricci, the Ricci - Bourguignon and the Yamabe flows. Supposing that the metric $g(t)$ evolves along this general geometric flow we derive a formula for the evolution of the eigenvalues of $-\mathbb{L}$ and prove monotonicity results for the eigenvalues of both $-\Delta + g(\nabla\eta,\nabla\cdot)$ and $-\mathbb{L}$. We then prove Reilly-type formula for the operator $\mathbb{L}$ and employ it to establish an upper bound for the first variation of the eigenvalues of $-\mathbb{L}$. Finally, in the pursuit of a theoretical explanation of our generalisations, we formulate two conjectures on the monotonicity of the eigenvalues of  Schr\"{o}dinger operators.
\end{abstract}
\maketitle

%==============================================================================
%                                                                 INTRODUCTION
%==============================================================================

\section{Introduction}
It is a natural question to ask how do the eigenvalues of given geometric operator evolve along a geometric flow. Various results in this direction have been obtained in the past ten years. More concretely, Di Cerbo \cite{Fab} studied the aforementioned question in the case of the Laplace-Beltrami operator $\Delta$ evolving along the Ricci flow. At about the same time, Cao \cite{cao1}
first studied the eigenvalues of the operator $-\Delta + \frac{1}{2}R$ under the Ricci flow with the scalar curvature assumption $R\geq0$, and soon after  he extended his techniques  to the eigenvalues of $-\Delta + cR$, $c\geq \dfrac{1}{4}$, evolving with the Ricci flow but this time without any curvature assumptions \cite{cao2}. In 2012, Cao, Hou and Ling \cite{cao3} worked out the case for the operator $-\Delta + aR$, $0<a\leq\dfrac{1}{2}$, also under the Ricci flow. The evolution and monotonicity of the latter operator have also been studied under the Ricci-Bourguignon flow, provided $a\neq0$ \cite{Zeng}. Very recently, the evolution and the monotonicity of the eigenvalues of the operator $\Delta_{\eta}+cR$ have been studied both under the Ricci flow \cite{fang1, fang2} and the Yamabe flow \cite{fang3}. The operator $\Delta_{\eta}$ is known in the literature  as the {\it Witten - Laplacian}, or the {\it drifting Laplacian}. Two remarks should be brought forward at this juncture. Firstly, for notational convenience, and in contrast to the more common symbol $\Delta_{\eta}$, we shall prefer to write $L$ for the Witten-Laplacian instead. Secondly, and more importantly, it should be noticed that, to the best of our knowledge, most of the evolution formulas known in the literature have been derived by making use of the so-called $\mathcal{F}$-{\it entropy functional}. Introduced by Perelman \cite{perelman}, it is known to be nondecreasing under the Ricci flow coupled to a backward heat-type equation which implies the monotonicity of the first eigenvalue of $-4\Delta+R$ along the Ricci flow. Notwithstanding, our approach is not relying upon the Perelman's $\mathcal{F}$-entropy functional, and our method of proof will be explained in detail in the next section.

The goal of this work is twofold. On the one hand, it is  to generalise the results on the evolution and monotonicity known so far without using the $\mathcal{F}$-entropy functional. On the other hand, it is to attempt a theoretical explanation of this, rather natural, generalisation. Motivated by the works cited above, we shall consider closed $n$-dimensional Riemannian manifold $(M, g)$  and a family of geometric operators acting on the smooth functions on the manifold  defined by 
\begin{equation*}
\mathbb{L}=\Delta - g(\nabla\eta,\nabla\cdot)+cR.
\end{equation*}
Here $\eta\in C^{\infty}(M)$, $\Delta$ is the standard Laplace-Beltrami operator on $M$, $R$ is the scalar curvature of the metric $g$ and $c$ is some real constant. Clearly, $\mathbb{L}$ can be thought of as a generalisation of $L$ as much as it can be thought of as a generalisation of either $\Delta$ or $\Delta+cR$. It is also clear that if the metric $g(t)$ varies with time, so will $\mathbb{L}$. 
Now, our idea is to introduce a ``flow'' which encompasses the Ricci, the Ricci-Bourguignon and the Yamabe flows, and to study the evolution and the monotonicity of the eigenvalues of the operator $-\mathbb{L}$ along this flow.  We shall then wish to see how our evolution formula compares to the already known cases. For this reason, we shall suppose that the metric $g(t)$ on our closed Riemannian manifold evolves with the following PDE
\begin{eqnarray}\label{RBY}
\frac{\partial g_{t}}{\partial t}=-2aRic_{t}+2\left(\rho R_{t}-\frac{\varphi}{n}r_{t}\right)g_{t},
\end{eqnarray}
where $a,\rho$ are real constants, $\varphi=-a+n\rho$, $r_{t}=\dfrac{\int_{M}R(t)\dm_{t}}{\int_{M}\dm_{t}}$ is the average scalar curvature, and the subscript $t$ means evolution in time. Evidently, this latter formula nicely generalises the three most studied flows in differential geometry. It is indeed immediately observed that for $a=1$ and $r_{t}=0$ the flow \eqref{RBY} reduces to the {\it Ricci-Bourguignon flow}. The {\it normalised Ricci flow} arises if $a=1$ and $\rho=0$, and its {\it unnormalised} version if, in addition, $r_{t}=0$. For $a=0$ and $\rho=-\frac{1}{2}$ we have the {\it normalised Yamabe flow}. 

The careful reader should immediately notice that  \eqref{RBY} does not always behave nicely from the PDE viewpoint. Indeed, the short-time existence as well as the uniqueness of the solution of the equation \eqref{RBY} for arbitrary initial data are not guaranteed for all values of the parameters $a,\,\rho$ and $\varphi$. An important and natural question is for which values of the latter parameters the evolution equation \eqref{RBY} admits unique short-time solutions. We shall, nevertheless, put this question aside as it is not of crucial importance for our purposes. It will suffice to consider only those values of $a,\,\rho$ and $\varphi$ for which the evolution equation \eqref{RBY} admits unique solutions. At the very least, as long as the parameters are chosen so that \eqref{RBY} restricts to either Ricci, Ricci-Bourguignon or Yamabe flow we shall be on the safe side. 

Following the idea just described we shall derive in Section \ref{evol} an evolution formula for the eigenvalues of the operator $-\mathbb{L}$ evolving with the flow \eqref{RBY}. Remarkably, our formula  nicely generalises  all of the evolution formulas appearing in the articles cited in the bibliography. We shall also prove, under some further hypotheses, that the eigenvalues of both $-L$ and  $-\mathbb{L}$ are non-decreasing. Thus, combining our results with those already known in the literature, the following question should be quite natural to ask.  
\begin{qu}\label{qu1}
To what extent the behaviour of the eigenvalues of the operators $\Delta$, $\Delta+cR$ and $L$ can be determined by the behaviour of the eigenvalues of $\mathbb{L}$? Or rather, are the monotonicity of the eigenvalues of the latter three operators somewhat inherited from the monotonicity of the eigenvalues of the Laplacian $\Delta$? 
\end{qu}

The paper is organised as follows. In Section \ref{prem} we fix notation, comment upon our method of proof and list without proofs all the main formulae which are to be exploited in the course of this work. In Section \ref{evol} we derive the evolution formula for the eigenvalues of $-\mathbb{L}$ under the flow \eqref{RBY}. In Section \ref{mono} we first study the monotonicity of the eigenvalues of $-L$ and then the monotonicity of the eigenvalues of the more general operator $-\mathbb{L}$. In Section \ref{appl} we derive the Reilly-type formula for the operator $\mathbb{L}$ which, combined with the evolution formula from Section \ref{evol} and the assumption that the scalar curvature $R$ is constant, results in an upper bound for the first variation of  eigenvalue of $-\mathbb{L}$.  In Section \ref{homotopy} we provide a conjectural answer of  Question \ref{qu1}. We also make some concluding remarks in Section \ref{concl}. We conclude this paper by appending an alternative proof  of our evolution formula as well as providing the proof of the monotonicity of the eigenvalues of $-\mathbb{L}$. 
\newline\\
{\bf Acknowledgements} Jose N. V. Gomes and Marcus Marrocos and others...

%=============================================================================
%                                                         PRELIMINARIES
%=============================================================================

\section{Preliminaries}\label{prem}

\subsection{Method of proof.}  
We shall consider throughout an $n$-dimensional oriented compact Riemannian manifold $(M,g)$ without boundary endowed with a {\it weighted measure} of the form $\dm=e^{-\eta}\dM$. Here $\dM$ is the standard volume element and the smooth function $\eta:M\rightarrow\mathbb{R}$ is often referred to as the {\it drifting function}. The evolution equation for the metric $g(t)$ will be either given by \eqref{RBY} or some appropriate modification of it. As already mentioned in the introduction, we shall always assume that the flows we  work with admit unique short-time solutions for the metric $g(t)$. Given the family of operators $\mathbb{L}=\Delta - g(\nabla\eta,\nabla\cdot)+cR$ on $M$ we shall consider the eigenvalue problem
$$-\mathbb{L}u(x,t)=\lambda(t)u(x,t).$$
It should be noticed that in the sequel $\lambda(t)$ is to be understood an eigenvalue of the operator 
$-\mathbb{L}$ unless otherwise stated.

Many authors follow Perelman's idea that the Ricci flow can be thought of as a gradient flow. It is  a beautiful idea with various applications. However, our work  is not relying upon it and we shall therefore skip the discussion on the $\mathcal F$-functional and its implications. The interested reader unfamiliar with this concept may still care to consult for details the original paper of Perelman \cite{perelman} as well as the lecture notes of Topping \cite{top}. The introduction sections of both \cite{cao1} and \cite{li} also provide some motivation for using Perelman's functional in the context of the evolution of the eigenvalues of the operator $-\Delta+\dfrac{R}{2}$.

Our approach to the problem is rather by a direct assault. We  shall exploit in this paper some specific properties of tensors, especially of the metric tensor $g$, as well as some relations between them. Once with these formulae at hand we shall think of the Ricci flow, the Ricci-Bourguignon flow, and the Yamabe flow as ``one flow". Using in addition some properties of the Witten-Laplacian $L$  we shall directly derive our formula for the evolution of the eigenvalues of $-\mathbb{L}$.

\subsection{On differentiability of the eigenvalues}
All our computations in the sequel would certainly be illegitimate if the eigenvalues of the operators in question were not differentiable in time. Clearly, a $C^1$-differentiability of the eigenvalues will be sufficient for the purposes of the present work. Luckily, the latter is guaranteed by a theorem of A. Kriegl and P. Michor \cite{KriMich}. Precisely speaking, one has the following\begin{theorem*}[A. Kriegl \& P. Michor]
Let $t \mapsto A(t)$ for $t\in\mathbb{R}$ be a curve of unbounded self-adgoint operators  in a Hilbert space with common domain of definition and with compact resolvent.  
\newline\\
(A)\,\, If $A$ is $C^{\infty}$, then the eigenvalues of $A(t)$ may be parametrised twice differentiably in $t$.
\newline\\
(B)\,\, If $A(t)$ is $C^{1,\alpha}$ for some $\alpha >0$ in $t\in\mathbb{R}$, then the eigenvalues of $A(t)$ may be parametrised in a $C^1$ way in $t$.
\end{theorem*}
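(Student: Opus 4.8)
The plan is to reduce the problem from the infinite-dimensional setting to a finite-dimensional one by means of the resolvent and the associated Riesz spectral projections, and then to reduce the finite-dimensional eigenvalue problem to the purely algebraic question of choosing, with the right regularity, the roots of a curve of real polynomials all of whose roots are real. The compactness of the resolvent is the crucial structural hypothesis here: it guarantees that for each fixed $t$ the spectrum of $A(t)$ is discrete, consisting of isolated eigenvalues of finite multiplicity accumulating only at infinity. This is precisely what makes a local, cluster-by-cluster analysis feasible, and both conclusions (A) and (B) will be obtained by the same scheme, only the regularity bookkeeping differing.

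First I would work locally in $t$. Fix $t_0$ and an eigenvalue $\lambda_0$ of $A(t_0)$, and enclose the finite cluster of eigenvalues of $A(t_0)$ lying near $\lambda_0$ by a small positively oriented contour $\gamma$ contained in the resolvent set. For $t$ close to $t_0$ the operators $A(t)$ share a common domain and have compact resolvent, so $R(z,t)=(A(t)-z)^{-1}$ is well defined and, measured in the graph norm, depends on $(z,t)$ with the same regularity in $t$ as the curve $t\mapsto A(t)$; one differentiates it using the second resolvent identity and the common-domain hypothesis. Consequently the Riesz projection
\[
P(t)=\frac{1}{2\pi i}\oint_{\gamma}R(z,t)\,dz
\]
is a curve of finite-rank projections of locally constant rank that is $C^{\infty}$ (respectively $C^{1,\alpha}$). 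Restricting $A(t)$ to the finite-dimensional range of $P(t)$, and transporting everything to a fixed model space by a moving frame of the same regularity built from $P(t)$ (a Kato transformation function), turns the local problem into one about a curve of Hermitian matrices $t\mapsto B(t)$, whose eigenvalues are exactly the eigenvalues of $A(t)$ enclosed by $\gamma$.

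It then remains to parametrise the eigenvalues of the Hermitian curve $B(t)$. These are precisely the roots of the characteristic polynomial $P_t(X)=\det\!\big(X\,\mathrm{Id}-B(t)\big)$, a curve of monic real polynomials all of whose roots are real (a hyperbolic curve of polynomials), with coefficients inheriting the regularity of $B$. I would then invoke the root-choosing theory for such curves: a $C^{\infty}$ curve of hyperbolic polynomials admits a parametrisation of its roots that is twice differentiable in $t$, while a $C^{1,\alpha}$ curve admits a $C^1$ parametrisation. Pulling these back through the reduction yields (A) and (B) locally, after which the local parametrisations must be patched into global ones. The gluing is possible because the spectrum is discrete and because we do not insist on keeping the eigenvalues in increasing order, so a reordering across collision times is permitted.

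The heart of the matter, and the main obstacle, is the sharp finite-dimensional root-choosing statement rather than the functional-analytic reduction. Near a time where several eigenvalues collide the naively ordered eigenvalues generically lose smoothness, and classical model examples of smooth curves of symmetric matrices with a degenerate crossing already show that one cannot in general do better than twice differentiable in the $C^{\infty}$ case; so the argument must resolve the behaviour at multiple roots by controlling the splitting rate of the cluster in terms of the derivatives (respectively the H\"older modulus) of the coefficients. Establishing the quantitative estimates that convert a bound on the coefficients into the claimed regularity of the chosen roots, and verifying that the permitted reordering yields a globally consistent choice, is where essentially all of the work lies; by comparison the reduction via Riesz projections, though it requires genuine care with the common domain and the graph norm, is comparatively routine.
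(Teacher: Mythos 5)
There is an important mismatch of expectations here: the paper does not prove this statement at all. It is quoted verbatim (as an unnumbered, attributed theorem) from the cited work of Kriegl and Michor \cite{KriMich}, and is used purely as a black box to justify that the eigenvalues $\lambda(t)$ occurring in the rest of the paper may be assumed $C^1$ in $t$. So there is no internal proof to compare your argument against; the only meaningful comparison is with the original proof of Kriegl--Michor, and your outline does in fact follow its broad strategy: establish regularity of the resolvent $t\mapsto (A(t)-z)^{-1}$ from regularity of $t\mapsto A(t)$ on the common domain, pass to finite dimensions via Riesz projections of locally constant rank, conjugate to a fixed model space, and then reduce to choosing roots of a curve of hyperbolic (all-roots-real) monic polynomials with the stated regularity, patching local choices using discreteness of the spectrum and the freedom to reorder.

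The genuine gap is the one you yourself flag: the finite-dimensional root-choosing statements --- that a $C^{\infty}$ curve of hyperbolic polynomials admits a twice differentiable parametrisation of its roots, and that a $C^{1,\alpha}$ curve admits a $C^1$ parametrisation --- are not lemmas one can simply ``invoke'' and still claim to have produced a proof, because they \emph{are} the substance of the theorem. These results (going back to Rellich, Bronshtein, and Alekseevsky--Kriegl--Losik--Michor, and sharpened in the very paper being quoted) require delicate quantitative control of how clusters of roots split, and their proofs occupy essentially all of the original articles. Your proposal therefore reduces the infinite-dimensional statement to a finite-dimensional statement of equal depth, which is a legitimate and faithful proof \emph{outline} but not a proof. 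A secondary, smaller gap: the claim that the resolvent inherits the regularity of $t\mapsto A(t)$ ``by the second resolvent identity and the common-domain hypothesis'' hides real work, since regularity of a curve of unbounded operators is defined via $t\mapsto A(t)u$ for $u$ in the common domain, and upgrading this to norm-regularity of the bounded resolvent curve requires closed-graph/uniform-boundedness arguments that Kriegl--Michor carry out explicitly. For the purposes of the paper under review, none of this matters: the authors only need the statement as a licence for differentiability, and the correct move (which the paper makes) is to cite it rather than prove it.
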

In fact, Kriegl and Michor prove more in their paper, but for the sake of brevity we shall briefly comment only on their results which are immediately relevant to our purposes. 

 A function $f$ is called $C^{k,\alpha}$ if it is $k$ times differentiable and for the $k$-th derivative the expression $\dfrac{f^{(k)}(t)-f^{(k)}(s)}{|t-s|^{\alpha}}$ is locally bounded in $t\neq s$. Now, as it is explicitly written in \cite{KriMich}, we have the following immediate consequences. Given a compact manifold $M$ consider the smooth curve $t\mapsto g_t$ of smooth Riemannian metrics on $M$. One then gets the corresponding smooth curve of $t\mapsto \Delta(g_t)$ of Laplace-Beltrami operators on 
 $L^2(M)$. Evidently, part $(A)$ of the theorem above guarantees that the eigenvalues can be arranged twice differentiably. Furthermore, one can consider a bounded region with smooth boundary $\Omega$ in $\mathbb{R}^n$ and the  $C^{1,\alpha}$-curve of Schr\"{o}dinger operators $S(t)=-\Delta + V(t)$ with varying potential function and Dirichlet boundary conditions. Then, by virtue of $(B)$, the eigenvalues can be arranged $C^1$.

\subsection{On general smooth variations of a metric $g(t)$.} The following comprises the main ingredient of our proof. For each $t$ we shall denote by $\h_{t}$ the symmetric $(0,2)$-tensor defined by $\h_{ij}(t)=\dfrac{d}{ds}\big|_{s=t}g_{ij}(s)$. Writing for its trace $h_{t}=\langle \h_{t},g_{t}\rangle$, one can easily check  that $\dfrac{d}{ds}\big|_{s=t}\dm_{s}=\dfrac{1}{2}h_{t}\dm_{t}$. This latter formula is essential as the measure itself will also vary in time. The evolution of the scalar curvature under a general smooth variation of the metric $g(t)$ is given by
\begin{eqnarray}\label{BBF}
\dfrac{\partial}{\partial t}R=-\langle Ric,\h\rangle-\Delta h+\mathrm{div}(\mathrm{div} \h).
\end{eqnarray}
This formula will be of utmost importance and a version of it can be found in  Besse \cite{besse}.

Now, for the purposes of this paper, one could think of $\h$ just as the flow \eqref{RBY}. Thus, we shall write
\begin{eqnarray}\label{BBS}
\h=-2aRic+2\bigg(\rho R-\frac{\varphi}{n}r\bigg)g\ \hbox{in} \ M\times[0,T).
\end{eqnarray}
By dint of this latter relation we can easily derive
\begin{eqnarray}\label{TR}
h=2\varphi(R-r) \ \hbox{in} \ M\times[0,T).
\end{eqnarray}
It is now evident that \eqref{BBF} can be rewritten in our context as
\begin{eqnarray}\label{AAW}
\dfrac{\partial}{\partial t}R=2a|Ric|^{2}+2\frac{R}{n}\bigg(\varphi r-\rho nR\bigg)+\psi\Delta R \ \hbox{in} \ M\times[0,T),
\end{eqnarray}
where $\psi=a-2(n-1)\rho$. 
Crucially important for us will also be the following formula.
\begin{equation}\label{impfor}
\frac{d}{dt}\Big|_{t=t_0}g(\nabla u,\nabla u)=2g(\nabla u',\nabla u)-\mathcal{H}(\nabla u,\nabla u).
\end{equation}
Here $u'$ means $\dfrac{\partial}{\partial t}u(x,t)$ (see {\bf Notation} at the end of this section).

\subsection{Some properties of the Witten-Laplacian.}
Of fundamental importance will also be some properties of the Witten Laplacian
 $L(\cdot)=\Delta-g(\nabla\eta,\nabla \cdot).$ 
It is not difficult to observe that it is self-adjoint in the Hilbert space $L^2(M,\dm)$. Notice also that for  all $f_1,f_2\in C_{0}^{\infty}(M)$ this operator  satisfies the following obvious property  
\begin{equation}\label{Lfg} 
L(f_1 f_2)=f_1Lf_2+f_2Lf_1+2g(\nabla f_1,\nabla f_2).
\end{equation}
Two important implications of this latter identity are as follows. Firstly, by virtue of this property it is almost immediate to check that if $-\mathbb{L}(u)=\lambda u$ then $$\mathbb{L}(u^{2})=-2\lambda u^{2}+2|\nabla u|^{2}-cRu^{2}.$$
We can rewrite this latter formula in the following more convenient form
\begin{eqnarray}\label{AAV}
-\lambda u^{2}+|\nabla u|^{2}-cRu^{2}=\frac{1}{2}L(u^{2}).
\end{eqnarray}
Secondly, it is well known that the formula of integration by parts for the operator $L$ is 
\begin{eqnarray} \label{int_pp}
\int_{M}\ell Lf\dm=-\int_{M}g(\nabla\ell,\nabla f)\dm,
\end{eqnarray}
for all $\,f,\ell\in C^{\infty}(M).$ Now, it is easy to perceive that \eqref{Lfg} and \eqref{int_pp} imply
\begin{equation}\label{cvx}
\int_{M}L(f^2)\dm=0,
\end{equation}
for any $f\in C^{\infty}(M)$.

\subsection{ Notation.} 
In order to keep our notation as simple as possible we shall often, but not always, omit writing the time variable (subscript) $t$, for  it will be tacitly assumed in this paper that all the operators, their eigenvalues and eigenfunctions as well as the metric and the scalar curvature evolve in time under some geometric flow. Thus, we shall rather write henceforth $\mathbb{L}$, $L$, $\lambda$, $u$, $R$, $g$, $\mathit{etc}$. In the same vein, we reserve the symbol $ ' $   to denote the derivatives in time and we shall frequently write $\lambda'$ instead of $\dfrac{d\lambda}{dt}(t)$, $R'$ instead of $\dfrac{\partial R}{\partial t}$, $\mathit{etc}$. The symbol $'$ will be used throughout, with a slight ambiguity, for denoting both ordinary and partial derivatives in time. However, it should be clear from the context which kind of derivative it would stand for in a given line of computation.

%============================================================================
%                                    EVOLUTION OF THE EIGENVALUES
%============================================================================

\section{Evolution of the eigenvalues}\label{evol}

In this section we shall study the evolution of the eigenvalues of the operator $-\mathbb{L}$ under the flow \eqref{RBY}. We begin with the proof of a core proposition which holds true for any variation $\h$ of the metric $g(t)$.
\begin{proposition}\label{P1}
Let $M^n$ be a closed Riemannian manifold with varying metric $g(t)$,\,$t\in[0,T)$. Assume that there is a $C^1$-family of normalised functions $u(x,t)$ such that $$-\mathbb{L}u(x,t)=\lambda (t)u(x,t).$$ Then,
\begin{eqnarray}\label{FVL}
\lambda'=\int_{M}\left[\frac{h}{4}L(u^{2})-\h(\nabla u,\nabla u)-cR'u^{2}\right]\dm.
\end{eqnarray} 
\end{proposition}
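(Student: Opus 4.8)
The plan is to avoid differentiating the operator $\mathbb{L}$ directly and instead to work with a Rayleigh-type expression for $\lambda$. First I would use that $-\mathbb{L}$ is self-adjoint in $L^2(M,\dm)$ together with the normalisation $\int_M u^2\,\dm=1$ to write $\lambda=\int_M u(-\mathbb{L}u)\,\dm$. Splitting $\mathbb{L}=L+cR$ and integrating by parts with \eqref{int_pp} (taking $\ell=f=u$) turns this into
$$\lambda=\int_M |\nabla u|^2\,\dm-c\int_M R u^2\,\dm.$$
This is the form best suited to differentiation, since its right-hand side involves only quantities whose time derivatives are supplied by the formulae collected in Section \ref{prem}.

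Next I would differentiate this identity in $t$, remembering that the weighted measure itself evolves via $\tfrac{d}{dt}\dm=\tfrac12 h\,\dm$. For the gradient term I would invoke \eqref{impfor} to replace $\tfrac{d}{dt}g(\nabla u,\nabla u)$ by $2g(\nabla u',\nabla u)-\mathcal{H}(\nabla u,\nabla u)$, while the curvature term contributes $-cR'u^2$, $-2cRuu'$ and the measure piece $-\tfrac{c}{2}hRu^2$. Before differentiating I would also record the derivative of the normalisation constraint, which gives $\int_M uu'\,\dm=-\tfrac14\int_M h u^2\,\dm$; this relation is what lets me eliminate the remaining $u'$ terms at the end.

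The decisive step is the treatment of the two terms containing $u'$. I would integrate $2g(\nabla u',\nabla u)$ by parts through \eqref{int_pp} to obtain $-2\int_M u' Lu\,\dm$, then substitute $Lu=-\lambda u-cRu$, which follows from the eigenvalue equation and $\mathbb{L}=L+cR$. The pieces proportional to $Ruu'$ then cancel exactly against the $-2c\int_M Ruu'\,\dm$ coming from the curvature term, and the surviving $2\lambda\int_M uu'\,\dm$ is rewritten, via the normalisation relation above, as $-\tfrac{\lambda}{2}\int_M hu^2\,\dm$. Collecting the three remaining terms carrying a factor $h$ leaves $\tfrac12\int_M h\big(-\lambda u^2+|\nabla u|^2-cRu^2\big)\,\dm$, at which point \eqref{AAV} identifies the bracket as $\tfrac12 L(u^2)$ and produces $\tfrac14\int_M hL(u^2)\,\dm$. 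Together with the leftover $-\mathcal{H}(\nabla u,\nabla u)$ and $-cR'u^2$ contributions this is precisely \eqref{FVL}. I expect the only genuine obstacle to be the bookkeeping: keeping track of the several $u'$- and $h$-terms so that the cancellations are applied correctly, and making sure the measure variation is included in every term that is differentiated.
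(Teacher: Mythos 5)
Your proposal is correct and follows essentially the same route as the paper's own proof: both start from $\lambda=\int_M|\nabla u|^2\,\dm-c\int_M Ru^2\,\dm$, differentiate using the measure evolution and \eqref{impfor}, eliminate the $u'$-terms via \eqref{int_pp} and $Lu=-\lambda u-cRu$, and identify the surviving $h$-terms as $\tfrac{h}{4}L(u^2)$ through \eqref{AAV}. The only cosmetic difference is that you invoke the differentiated normalisation constraint $\int_M uu'\,\dm=-\tfrac14\int_M hu^2\,\dm$ explicitly, whereas the paper packages the same cancellation as $\lambda\,\tfrac{d}{dt}\int_M u^2\,\dm\equiv 0$.
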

Remarkably, we have discovered two different proofs of this statement. We shall present in this section the one we think is the more elegant. Notwithstanding, we believe that the alternative proof, albeit longer, bears  its value and brings its contribution to this paper. For this reason, we have decided to convey it in an appendix.  
\begin{proof}
Integrating formula \eqref{AAV} and using the fact that $\int_M L(u^{2}) \dm=0$ we obtain
\begin{equation*}
\lambda (t) =\int_M|\nabla u|^{2}\dm-c\int_MRu^{2}\dm.
\end{equation*}
Differentiating this latter formula we get
\begin{eqnarray*}
\dfrac{d}{dt}\lambda(t)&=&\int_M\dfrac{d}{dt}|\nabla u|^{2}\dm+\dfrac{1}{2}\int_Mh|\nabla u|^{2}\dm-c\int_MR'u^{2}\dm \\\\ &-& c\int_MR(u^{2})'\dm-\dfrac{c}{2}\int_MhRu^{2}\dm.
\end{eqnarray*}
We next rewrite identity  \eqref{AAV} as 
$$\dfrac{h}{2}|\nabla u|^{2}-\dfrac{c}{2}hRu^{2}=\frac{h}{4}L(u^{2})+\dfrac{h}{2}\lambda u^{2}$$
and use formula  \eqref{impfor} to obtain
\begin{equation*}
\dfrac{d}{dt}\lambda(t)=\int_{M}\left[\frac{h}{4}L(u^{2})-\h(\nabla u,\nabla u)-cR'u^{2}\right]\dm + K,
\end{equation*}
where
\begin{equation*}
K=2\int_Mg(\nabla u', \nabla u)\dm -c\int_MR(u^2)'\dm+\dfrac{\lambda}{2}\int_Mhu^2\dm.
\end{equation*}
It remains to show that $K=0$. By means of the  integration by parts formula we can rewrite $K$ as
\begin{equation*}
K=-2\int_Mu'Lu\,\dm -c\int_MR(u^2)'\dm+\dfrac{\lambda}{2}\int_Mhu^2\dm.
\end{equation*}
As $-\mathbb{L}u=\lambda u$ implies $Lu= -\lambda u - cRu$ we immediately compute that
\begin{equation*}
K=2\lambda\int_Muu'\dm +\dfrac{\lambda}{2}\int_Mhu^2\dm=\lambda\dfrac{d}{dt}\int_Mu^2\dm\equiv0.
\end{equation*}
\end{proof}
Without much effort we have obtained the desired evolution formula in quite a compact  form. At the present moment, however, we are neither aware  of a feasible way to exploit it in order to proof monotonicity results nor we can clearly see that it indeed generalises the results of other authors as mentioned earlier. For this reasons we shall have to elaborate our formula a bit. 
By virtue of relations  \eqref{AAV} and \eqref{TR} we can write
\begin{eqnarray*}
\frac{h}{4}L(u^{2})=\varphi\frac{(R-r)}{2}L(u^{2})
=R\varphi\bigg(-\lambda u^{2}+|\nabla u|^{2}-cRu^{2}\bigg)-\frac{r\varphi}{2}L(u^{2}).
\end{eqnarray*}
Using relations \eqref{AAV} and  \eqref{BBS} we compute
\begin{eqnarray*}
\h(\nabla u,\nabla u)=-2aRic(\nabla u,\nabla u)+2\rho R|\nabla u|^{2}-2\frac{r\varphi}{n}\bigg(\frac{1}{2}L(u^{2})+\lambda u^{2}+cRu^{2}\bigg).
\end{eqnarray*}
It is immediate by \eqref{AAW}  that
\begin{eqnarray*}
cR'u^{2}=c\Big(2a|Ric|^{2}+2\frac{R}{n}(\varphi r-\rho nR)+\psi\Delta R\Big)u^{2}.
\end{eqnarray*}
It now readily follows that the integrand in relation \eqref{FVL} can be written as
\begin{eqnarray*}
I&=&\frac{2r\varphi}{n}\lambda u^{2}-\lambda\varphi Ru^{2}+(\varphi-2\rho)R|\nabla u|^{2}+2aRic(\nabla u,\nabla u)\\
&&-\frac{r\varphi(n-2)}{2n}L(u^{2})-c\Big(2a|Ric|^{2}+(\varphi -2\rho)R^{2}+\psi\Delta R\Big)u^{2}. 
\end{eqnarray*}
We have thus arrived at the principal theorem of this section and one of the main results in this paper.
\begin{theorem}\label{T1}
Let $g(t)$,\,$t\in[0,T)$, be a solution to the flow \eqref{RBY} on a closed manifold $M^n$. Assume that there is a $C^1$-family of normalised functions $u(x,t)$ such that $-\mathbb{L}u(x,t)=\lambda (t)u(x,t)$. Then, for $\psi=a-2(n-1)\rho$, the eigenvalues $\lambda(t)$ satisfy
 \begin{eqnarray}\label{FVRBY}
\nonumber\dfrac{d}{dt}\lambda(t)&=&\frac{2\lambda r}{n}\varphi-\lambda\varphi\int_{M}u^{2}R\dm
+(\varphi-2\rho)\int_{M}R|\nabla u|^{2}\dm\\\\\nonumber&&+2a\int_{M}Ric(\nabla u,\nabla u)\dm-c\int_{M}\bigg[R^{2}(\varphi-2\rho)+2a|Ric|^{2}+\psi\Delta R\bigg]u^{2}\dm.
\end{eqnarray}
\end{theorem}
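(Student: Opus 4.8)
The plan is to specialise the general first-variation formula \eqref{FVL} of Proposition \ref{P1} to the flow \eqref{RBY} and then integrate. First I would feed into the integrand of \eqref{FVL} the flow-specific data already assembled above: the trace relation $h=2\varphi(R-r)$ from \eqref{TR}, the explicit tensor $\h$ from \eqref{BBS}, and the evolution $R'$ of the scalar curvature from \eqref{AAW}. This converts the three summands $\frac{h}{4}L(u^2)$, $\h(\nabla u,\nabla u)$ and $cR'u^2$ into expressions built from the curvature quantities $Ric(\nabla u,\nabla u)$, $R|\nabla u|^2$, $|Ric|^2u^2$, $R^2u^2$ and $\Delta R\,u^2$, together with the elementary quantities $\lambda u^2$, $|\nabla u|^2$ and $L(u^2)$.

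The key device for collapsing everything is the pointwise identity \eqref{AAV}, that is $-\lambda u^2+|\nabla u|^2-cRu^2=\tfrac12 L(u^2)$, which I would use in both directions. In $\frac{h}{4}L(u^2)=\tfrac{\varphi(R-r)}{2}L(u^2)$ I would convert the $R$-weighted half back into $\lambda u^2$, $|\nabla u|^2$ and $Ru^2$, while leaving the $r$-weighted half as $-\tfrac{r\varphi}{2}L(u^2)$. In $\h(\nabla u,\nabla u)$, by contrast, I would keep the $\rho R|\nabla u|^2$ term intact but rewrite the $\tfrac{r\varphi}{n}|\nabla u|^2$ term through \eqref{AAV} so as to expose its $L(u^2)$- and $\lambda u^2$-content. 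The effect of this selective use is that the two surviving curvature-gradient terms fuse into the single coefficient $(\varphi-2\rho)R|\nabla u|^2$; the two $L(u^2)$-contributions combine into $-\tfrac{r\varphi(n-2)}{2n}L(u^2)$; the quadratic terms assemble into $-c(\varphi-2\rho)R^2u^2$; and the $\Delta R$ term carries the constant $\psi=a-2(n-1)\rho$ straight from \eqref{AAW}. A good internal check is that the two linear $cRu^2$ cross-terms, one from $\h(\nabla u,\nabla u)$ and one from $cR'u^2$, cancel identically. Collecting the surviving coefficients yields the integrand $I$ displayed above.

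Finally I would integrate $I$ against $\dm$ over $M$. Two facts then finish the proof. The identity $\int_M L(u^2)\dm=0$ from \eqref{cvx} annihilates the $L(u^2)$-term, and the normalisation $\int_M u^2\dm=1$ turns $\tfrac{2r\varphi}{n}\lambda u^2$ into the constant $\tfrac{2\lambda r}{n}\varphi$; every remaining term passes under the integral sign unchanged, producing exactly \eqref{FVRBY}.

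I expect the main obstacle to be organisational rather than conceptual: the argument is essentially a careful exercise in coefficient-tracking across the three expanded summands. The one genuinely non-mechanical decision is the selective application of \eqref{AAV} described above, since applying it uniformly would scatter the $|\nabla u|^2$ and $L(u^2)$ contributions and obscure the compact final form; choosing exactly which occurrences to rewrite is what makes the curvature-gradient and $L(u^2)$ terms regroup cleanly.
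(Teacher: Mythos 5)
Your proposal is correct and follows essentially the same route as the paper: it specialises the general formula \eqref{FVL} of Proposition \ref{P1} via \eqref{TR}, \eqref{BBS} and \eqref{AAW}, applies \eqref{AAV} selectively (to the $R$-weighted half of $\tfrac{h}{4}L(u^2)$ and to the $\tfrac{r\varphi}{n}|\nabla u|^2$ piece of $\h(\nabla u,\nabla u)$) to regroup the integrand, and then integrates using \eqref{cvx} and the normalisation $\int_M u^2\,\dm=1$. Your internal check that the two $cRu^2$ cross-terms cancel is also exactly what happens in the paper's computation.
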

Evidently, the latter theorem is merely a reincarnation of the evolution formula \eqref{FVL}. Albeit not as beautiful, it is  in a sense more enlightening. Indeed, taking $a=1, r=0, \rho=0, c=0$ we perceive that formula \eqref{FVRBY} reduces to
$$\dfrac{d}{dt}\lambda(t)=\lambda\int_{M}u^{2}R\,\dm
-\int_{M}R|\nabla u|^{2}\dm+2\int_{M}Ric(\nabla u,\nabla u)\dm.$$
Remarkably, this is exactly the evolution  formula for the eigenvalues of the Laplacian under unnormalised Ricci flow (see Proposition 2.1 in \cite{Fab}).  In fact, Theorem \ref{T1} nicely encompasses many other particular cases. We invite the reader at this juncture to check that formula \eqref{FVRBY} reduces to the corresponding evolution formulas in the following works \cite{cao1, cao2, cao3, fang3, Zeng}. 

We shall close this section by swiftly proving  two corollaries of Theorem \ref{T1}. Again, they can be thought of as extensions of two of Di Cerbo's corollaries (see \cite{Fab}). It must be clear  at this point that, in much the same spirit, one can easily derive other corollaries reminiscent to those already published in the literature.
\begin{corollary}\label{cor1}
Let $\lambda (t)$ be an eigenvalue of $-\mathbb{L}$ associated to the eigenfunction $u(x,t)$ under the flow \eqref{RBY}. If the scalar curvature $R$ is constant along this flow for each $t\in[0,T)$, then
\begin{eqnarray}\label{A2}
\dfrac{d}{dt}\lambda(t)=-\frac{2a\lambda R}{n}+2a\int_{M}Ric(\nabla u,\nabla u)\dm-2ac\int_{M}u^{2}|Ric|^{2}\dm
\end{eqnarray}
in $M\times[0,T)$.
\end{corollary}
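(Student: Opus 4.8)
The plan is to specialise the evolution formula \eqref{FVRBY} of Theorem \ref{T1} to the case at hand, exploiting two consequences of the hypothesis that $R$ is spatially constant for each fixed $t$. First, $R$ constant forces $\nabla R=0$, hence $\Delta R=0$, so the term carrying $\psi\Delta R$ drops out entirely. Second, the average scalar curvature coincides with the constant itself, i.e. $r=R$, and since the eigenfunctions are normalised we have $\int_M u^2\dm=1$; consequently $\int_M Ru^2\dm=R$ and $\int_M R^2u^2\dm=R^2$. Substituting these into \eqref{FVRBY} immediately collapses several integrals into pure multiples of $R$ and $R^2$.

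The one term in \eqref{FVRBY} having no counterpart in \eqref{A2} is $(\varphi-2\rho)\int_M R|\nabla u|^2\dm=(\varphi-2\rho)R\int_M|\nabla u|^2\dm$, and disposing of it is the crux of the argument. I would recall the Rayleigh-type identity obtained in the proof of Proposition \ref{P1} by integrating \eqref{AAV}, namely $\lambda=\int_M|\nabla u|^2\dm-c\int_M Ru^2\dm$; under the present hypothesis this reads $\int_M|\nabla u|^2\dm=\lambda+cR$. Feeding this back in turns the offending gradient integral into $(\varphi-2\rho)R\lambda+(\varphi-2\rho)cR^2$, and the $(\varphi-2\rho)cR^2$ piece then cancels exactly against the $cR^2(\varphi-2\rho)$ contribution coming from the bracketed curvature term of \eqref{FVRBY}.

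It remains to collect the coefficient of $\lambda R$. The surviving contributions are $\tfrac{2r\varphi}{n}\lambda=\tfrac{2\varphi}{n}\lambda R$ from the first term, $-\lambda\varphi R$ from the second, and $(\varphi-2\rho)\lambda R$ from the converted gradient term; their sum is $\lambda R\big(\tfrac{2\varphi}{n}-2\rho\big)$. Here I would invoke the definition $\varphi=-a+n\rho$, whence $\tfrac{2\varphi}{n}-2\rho=-\tfrac{2a}{n}$, producing precisely the term $-\tfrac{2a\lambda R}{n}$ of \eqref{A2}. The two remaining integrals, $2a\int_M Ric(\nabla u,\nabla u)\dm$ and $-2ac\int_M|Ric|^2u^2\dm$, are already in the desired form, so the proof concludes.

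I expect the main obstacle to be bookkeeping rather than anything conceptual: one must recognise that the stray gradient integral is eliminated only after re-expressing $\int_M|\nabla u|^2\dm$ through $\lambda$, and then verify the two cancellations, namely the $cR^2$ terms against each other and the collapse of the $\lambda R$ coefficient to $-\tfrac{2a}{n}$ via $\varphi=-a+n\rho$. A quicker, essentially equivalent route would be to return to Proposition \ref{P1} directly: since $r=R$ gives $h=2\varphi(R-r)=0$, the term $\tfrac{h}{4}L(u^2)$ vanishes outright, $\h$ reduces to $-2a\,Ric+\tfrac{2aR}{n}g$, and \eqref{AAW} gives $R'=2a|Ric|^2-\tfrac{2aR^2}{n}$; substituting and again using $\int_M|\nabla u|^2\dm=\lambda+cR$ yields \eqref{A2} after the same cancellation of the $R^2$ terms.
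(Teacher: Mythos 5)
Your proof is correct and follows essentially the same route as the paper's: specialise \eqref{FVRBY} using $r=R$ and $\Delta R=0$, substitute $\int_M|\nabla u|^2\,\dm=\lambda+cR$ (obtained by integrating \eqref{AAV}), and collapse the coefficient of $\lambda R$ to $-\tfrac{2a}{n}$ via $\varphi=-a+n\rho$, with the $cR^2(\varphi-2\rho)$ terms cancelling exactly as you describe. Your alternative remark, working directly from Proposition \ref{P1} where $r=R$ forces $h=0$ and $\mathcal{H}=-2a\,Ric+\tfrac{2aR}{n}g$, is also valid and arguably cleaner, but it is a bonus rather than the main argument, which coincides with the paper's own.
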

\begin{proof}
If $R$ is constant then $r=R$, and relation \eqref{FVRBY} becomes
\begin{eqnarray*}
\lambda'&=&\frac{2\lambda R}{n}\varphi-\lambda\varphi R+R(\varphi-2\rho)\int_{M}|\nabla u|^{2}\dm+2a\int_{M}Ric(\nabla u,\nabla u)\dm\\&-&cR^{2}(\varphi-2\rho)-2ac\int_{M}u^2|Ric|^{2}\dm.
\end{eqnarray*}
Using \eqref{AAV} one computes
\begin{eqnarray*}
\int_{M}|\nabla u|^{2}\dm=\lambda +cR,
\end{eqnarray*}
which, along with the fact that $\varphi=-a+n\rho$, implies relation \eqref{A2}.
\end{proof}

Notice that a use of this  corollary will be made later in Section \ref{appl}. 

\begin{corollary}\label{cor2}
Let $\lambda (t)$ be an eigenvalue of $-\mathbb{L}$ associated to the eigenfunction $u(x,t)$ under the flow \eqref{RBY} on a compact surface $(M^{2},g)$. Then, the following variation formula is true in $M\times[0,T)$
\begin{eqnarray}\label{A14}
\nonumber\dfrac{d}{dt}\lambda(t)&=&\lambda r\varphi-\lambda\varphi\int_{M}u^{2}R\dm+c\varphi\int_{M}u^{2}\Delta R\dm.
\end{eqnarray}
\end{corollary}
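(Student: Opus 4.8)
The plan is to obtain Corollary \ref{cor2} as a direct specialisation of Theorem \ref{T1} to the two-dimensional setting, exploiting the rigidity of the curvature tensor on surfaces. First I would set $n=2$ in the evolution formula \eqref{FVRBY} and recall that on any surface the Ricci tensor is pure trace, $Ric=\frac{R}{2}g$. Consequently $Ric(\nabla u,\nabla u)=\frac{R}{2}|\nabla u|^{2}$ and $|Ric|^{2}=\frac{R^{2}}{4}g_{ij}g^{ij}=\frac{R^{2}}{2}$. I would substitute these two identities into the two terms of \eqref{FVRBY} that carry the Ricci tensor, and I would simplify the leading term, noting that $\frac{2\lambda r}{n}\varphi$ collapses to $\lambda r\varphi$ for $n=2$, matching the leading term of \eqref{A14}, while $-\lambda\varphi\int_{M}u^{2}R\,\dm$ is carried over unchanged.

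Next I would collect the gradient terms. After the substitution, the coefficient multiplying $\int_{M}R|\nabla u|^{2}\,\dm$ becomes $(\varphi-2\rho)+a$. Since $\varphi=-a+n\rho=-a+2\rho$ when $n=2$, this coefficient vanishes, so the entire gradient contribution drops out. The same dimensional relation disposes of the scalar-curvature-squared terms: once $2a|Ric|^{2}=aR^{2}$ is inserted, the coefficient of $-c\int_{M}R^{2}u^{2}\,\dm$ is again $(\varphi-2\rho)+a=0$, so these terms vanish as well.

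Finally, only the Laplacian-of-curvature term survives. Its coefficient is $-c\psi$, and $\psi=a-2(n-1)\rho=a-2\rho$ for $n=2$. Observing that $\psi=-\varphi$ in dimension two, this becomes $+c\varphi\int_{M}u^{2}\Delta R\,\dm$, which is precisely the last term of \eqref{A14}. Assembling the surviving pieces then yields the claimed formula.

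I do not anticipate a genuine obstacle here, as the argument is a specialisation rather than a new estimate; the only point requiring care is the bookkeeping of the cancellations, all of which hinge on the single dimensional identity $\varphi+a=2\rho$ valid when $n=2$ (equivalently $\psi=-\varphi$). The mild subtlety, if any, lies in checking that the two independent cancellations — in the gradient terms and in the $R^{2}$ terms — are both driven by this same relation, and in confirming the sign in the $\Delta R$ term through $\psi=-\varphi$.
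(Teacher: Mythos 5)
Your proposal is correct and is essentially identical to the paper's own proof: both specialise formula \eqref{FVRBY} to $n=2$, use $Ric=\frac{R}{2}g$ and $|Ric|^{2}=\frac{R^{2}}{2}$, and invoke the dimensional identities $\varphi-2\rho=-a$ and $\psi=-\varphi$ to cancel the gradient and $R^{2}$ terms and fix the sign of the $\Delta R$ term. The paper merely states these substitutions without spelling out the bookkeeping, which you carry out explicitly.
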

\begin{proof}
Clearly, $n=2$ implies  $\varphi-2\rho=-a$, $\psi=-\varphi$, $Ric=\dfrac{R}{2}g$ and $|Ric|^{2}=\dfrac{R^{2}}{2}$. Now, plugging these latter relations in equation \eqref{FVRBY} yields the desired formula.
\end{proof}

%=============================================================================
%                                               MONOTONICITY OF THE EIGENVALUES
%=============================================================================

\section{Monotonicity of the eigenvalues}\label{mono}

 A slight modification of the flow \eqref{RBY} and imposing some assumptions on the curvature will yield that the eigenvalues of the Witten - Laplacian $L$ are non-decreasing.  To be able to state the first monotonicity result we shall  define the following quantity
$$\mathfrak{T}_{t}=\dfrac{2R_{t}}{n}\left(\dfrac{\varphi r_{t}}{n}-\rho R_{t}\right)g_{t}+(\psi-1)\nabla^{2}R_{t}.$$ 
Here, and henceforth, the symbol $\nabla^2$ is always to mean the Hessian. Also, by $\langle \mathfrak{T},g\rangle\geq 0 \ \hbox{in} \ M\times[0,T)$ we shall mean that the inequality $\langle \mathfrak{T}_t,g_t \rangle\geq 0$ holds true for all $t\in[0,T)$. Notice that this condition holds true trivially in the case of unnormalised Ricci flow  as $\mathfrak{T}=0$.

\begin{proposition}\label{A16}
Let $R_{t}$ be the evolution of the scalar curvature under the flow given by \eqref{RBY}. Let $K\in\mathbb{R}$ such that $R\geq K \ \hbox{in} \ M\times\{0\}$. If $\langle \mathfrak{T},g\rangle\geq 0 \ \hbox{in} \ M\times[0,T)$ and $a\geq 0$, then
$R\geq K \ \hbox{in} \ M\times[0,T)$.
\end{proposition}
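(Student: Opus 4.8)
The statement is a scalar-curvature lower bound that propagates forward in time along the flow \eqref{RBY}, so the natural tool is the parabolic maximum principle applied to the evolution equation \eqref{AAW} for $R$. My plan is to rewrite \eqref{AAW} so that the right-hand side is manifestly controlled from below in terms of $R$ and the hypotheses $\langle\mathfrak{T},g\rangle\ge 0$ and $a\ge 0$, and then to invoke the maximum principle for the resulting scalar reaction-diffusion equation.

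First I would recall that \eqref{AAW} reads
\begin{equation*}
\frac{\partial}{\partial t}R=2a|Ric|^{2}+2\frac{R}{n}\big(\varphi r-\rho nR\big)+\psi\Delta R,
\end{equation*}
and I would note that the principal (second-order) term is $\psi\Delta R$, which is the diffusion term driving the maximum principle, while the remaining terms form the reaction. The quantity $\mathfrak{T}$ defined just above the proposition has trace $\langle\mathfrak{T},g\rangle=\frac{2R}{n}\big(\tfrac{\varphi r}{n}-\rho R\big)n+(\psi-1)\Delta R=2\frac{R}{n}(\varphi r-\rho nR)+(\psi-1)\Delta R$, since $\langle\nabla^2 R,g\rangle=\Delta R$ and $\langle g,g\rangle=n$. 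Hence the reaction-plus-part-of-diffusion terms of \eqref{AAW} can be reorganised as
\begin{equation*}
\frac{\partial}{\partial t}R=2a|Ric|^{2}+\langle\mathfrak{T},g\rangle+\Delta R.
\end{equation*}
This is the key algebraic identity: it isolates a genuine Laplacian $\Delta R$ (coefficient $1$) as the diffusion operator, while all the flow-dependent parameters get absorbed into $\langle\mathfrak{T},g\rangle$ and the sign-definite term $2a|Ric|^2$.

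Next I would observe that under the two hypotheses the non-diffusion terms are nonnegative: $a\ge 0$ gives $2a|Ric|^{2}\ge 0$, and by assumption $\langle\mathfrak{T},g\rangle\ge 0$ throughout $M\times[0,T)$. Therefore $R$ satisfies the differential inequality $\frac{\partial}{\partial t}R\ge\Delta R$, i.e. $R$ is a supersolution of the heat equation. Since $M$ is closed (compact without boundary) and $R\ge K$ at $t=0$, the scalar maximum principle for supersolutions of $\partial_t-\Delta$ yields $R\ge K$ for all $t\in[0,T)$: indeed, one can apply the standard comparison argument by setting $w=R-K$, which satisfies $\partial_t w\ge\Delta w$ with $w\ge 0$ initially, and concluding $w\ge 0$ for all later times (for instance by examining the spatial minimum of $w(\cdot,t)$, where $\Delta w\ge 0$ forces $\frac{d}{dt}\min_M w\ge 0$).

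The main obstacle, and the step that demands care, is the rewriting of \eqref{AAW} into the supersolution form: one must verify that the coefficient of $\Delta R$ comes out to be exactly $1$ after subtracting off the $(\psi-1)\nabla^2R$ contribution hidden in $\langle\mathfrak{T},g\rangle$, so that $\psi\Delta R=(\psi-1)\Delta R+\Delta R$ aligns precisely with the trace of $\mathfrak{T}$. This is a routine but sign-sensitive computation, and the whole argument hinges on it producing a nonnegative-reaction heat supersolution rather than an equation with a wrong-signed or degenerate principal part. A secondary point worth a remark is that the maximum principle presupposes enough regularity and short-time existence of $R_t$, which the paper has already agreed to assume; with that in hand, the conclusion follows immediately.
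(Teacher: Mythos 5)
Your proposal is correct and follows essentially the same route as the paper: both trace $\mathfrak{T}$ against $g$ to see that the hypothesis $\langle\mathfrak{T},g\rangle\geq 0$ together with $a\geq 0$ turns the evolution equation \eqref{AAW} into the heat supersolution inequality $\partial_t R\geq\Delta R$, and then conclude by the scalar maximum principle on the closed manifold. The only cosmetic difference is that you spell out the comparison argument for $w=R-K$, while the paper simply cites Proposition 2.9 of \cite{chow}.
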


\begin{proof}
By hypothesis
\begin{eqnarray*}
\frac{2R}{n}\bigg(\varphi r-\rho nR\bigg)+\psi\Delta R\geq\Delta R \ \hbox{in} \ M\times[0,T).
\end{eqnarray*}
On the other hand, relation \eqref{AAW} implies
\begin{eqnarray*}
R'=2a|Ric|^{2}+2\frac{R}{n}\bigg(\varphi r-\rho nR\bigg)+\psi\Delta R \ \hbox{in} \ M\times[0,T).
\end{eqnarray*}
Since $a\geq 0$, it follows that $R'\geq\Delta R \ \hbox{in} \ M\times[0,T)$. The desired result now follows directly from the maximum principle (see Proposition 2.9 in \cite{chow}).
\end{proof}

In order to prove our first monotonicity result we shall confine ourselves to the flow $\mathcal{F}$ which is defined by formula \eqref{RBY} with $a\geq 0$ and $\rho\leq\dfrac{a}{2(n-1)}$. Albeit a tad restricted, this flow still encompasses the Ricci, the Ricci - Bourguignon and the Yamabe flows which is sufficient for the generalising purposes of this paper.  

 \begin{theorem}\label{T2}
Let $\big(M^{n},g(t)\big)$, $t\in[0,T)$, be a solution of the flow $\mathcal{F}$ on a closed manifold of dimension $n>2$ and let $\lambda(t)$ be an eigenvalue of  $-L$. Suppose that there exists a non-negative constant $\mathcal{A}$ satisfying the inequalities
\begin{eqnarray}\label{A11}
 aRic+\frac{(\varphi-2\rho)}{2}Rg\geq-\mathcal{A}g, \ \hbox{in} \ M\times[0,T),
 \end{eqnarray}
and
\begin{eqnarray}\label{A12}
R\geq 2\left(\frac{-\mathcal{A}}{\varphi}+\frac{r}{n}\right), \ \hbox{in} \ M\times\{0\}.
\end{eqnarray}
If $\langle \mathfrak{T}_t,g\rangle\geq 0 \ \hbox{in} \ [0,T)$, then $\lambda(t)$ is monotone increasing along this flow.
\end{theorem}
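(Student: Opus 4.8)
The operator $-L$ is the case $c=0$ of $-\mathbb{L}$, so the plan is to read off the evolution of $\lambda(t)$ from Theorem \ref{T1} with $c=0$ and then show that the resulting right-hand side is non-negative. Setting $c=0$ in \eqref{FVRBY} and regrouping leaves
\begin{equation*}
\lambda'=\lambda\varphi\left(\frac{2r}{n}-\int_M u^2R\,\dm\right)+\int_M\Big[(\varphi-2\rho)R\,|\nabla u|^2+2a\,Ric(\nabla u,\nabla u)\Big]\dm .
\end{equation*}
The whole argument rests on estimating the two groups separately: the curvature integral from below using the pinching hypothesis \eqref{A11}, and the $r$-dependent part using \eqref{A12} together with a maximum principle.

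First I would rewrite the integrand of the last term as $2\big[a\,Ric+\tfrac{\varphi-2\rho}{2}Rg\big](\nabla u,\nabla u)$ and invoke \eqref{A11}, which gives the pointwise lower bound $-2\mathcal{A}\,|\nabla u|^2$, so that
\begin{equation*}
\int_M\Big[(\varphi-2\rho)R\,|\nabla u|^2+2a\,Ric(\nabla u,\nabla u)\Big]\dm\geq-2\mathcal{A}\int_M|\nabla u|^2\,\dm .
\end{equation*}
Integrating identity \eqref{AAV} with $c=0$ and using \eqref{cvx} together with the normalisation $\int_M u^2\,\dm=1$ yields $\int_M|\nabla u|^2\,\dm=\lambda$; this also shows $\lambda\geq0$, since $-L$ is non-negative. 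Hence the curvature terms are bounded below by $-2\mathcal{A}\lambda$, and using $\lambda\geq0$ the inequality $\lambda'\geq0$ reduces to $\varphi\big(\tfrac{2r}{n}-\int_M u^2R\,\dm\big)\geq 2\mathcal{A}$. Since $\varphi=-a+n\rho\leq a\,\tfrac{2-n}{2(n-1)}\leq0$ on the flow $\mathcal{F}$ (and $\varphi<0$ in every nondegenerate case, as \eqref{A12} already presupposes $\varphi\neq0$), dividing by $\varphi<0$ reverses the inequality and leaves the single requirement
\begin{equation*}
\int_M u^2R\,\dm\geq 2\left(\frac{-\mathcal{A}}{\varphi}+\frac{r}{n}\right).
\end{equation*}

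Because $u^2\geq0$ and $\int_M u^2\,\dm=1$, this is implied by the pointwise scalar-curvature bound $R\geq 2\big(\tfrac{-\mathcal{A}}{\varphi}+\tfrac{r}{n}\big)$, which at $t=0$ is exactly hypothesis \eqref{A12}. It therefore remains to propagate this bound along the flow, and here I would appeal to Proposition \ref{A16}: since $a\geq0$ and $\langle\mathfrak{T},g\rangle\geq0$ on $[0,T)$, the evolution \eqref{AAW} forces $R'\geq\Delta R$, and the maximum principle preserves a lower bound on $R$.

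The delicate point, and the step I expect to be the main obstacle, is that the quantity to be preserved is not a fixed constant but the time-dependent bound $2\big(\tfrac{-\mathcal{A}}{\varphi}+\tfrac{r(t)}{n}\big)$, whereas Proposition \ref{A16} only propagates \emph{constant} lower bounds; applying it with $K=2\big(\tfrac{-\mathcal{A}}{\varphi}+\tfrac{r(0)}{n}\big)$ leaves the residual term $\tfrac{2\lambda\varphi}{n}\big(r(t)-r(0)\big)$, whose sign hinges on the monotonicity of the average $r$. One must therefore either show that $r(t)$ is non-increasing, so that the constant $K$ dominates the running bound, or run the maximum principle directly on the shifted quantity $R-\tfrac{2r}{n}$, verifying $\big(R-\tfrac{2r}{n}\big)'\geq\Delta\big(R-\tfrac{2r}{n}\big)$ at an interior spatial minimum. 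Controlling $r'(t)$ — which means differentiating $r=\tfrac1V\int_M R\,\dm$ (noting that $V=\int_M\dm$ is constant along $\mathcal{F}$) and handling the term $\int_M\psi\Delta R\,\dm$ coming from the drifting measure — is where the real work lies; granting it, the chain of inequalities closes and gives $\lambda'\geq0$.
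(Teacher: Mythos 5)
Your proposal is, in every essential step, the paper's own proof of Theorem \ref{T2}: specialise Theorem \ref{T1} to $c=0$; group the two curvature integrals as $2\bigl[aRic+\tfrac{\varphi-2\rho}{2}Rg\bigr](\nabla u,\nabla u)$ and bound them below by $-2\mathcal{A}\int_M|\nabla u|^{2}\dm$ via \eqref{A11}; use \eqref{AAV} and \eqref{cvx} with the normalisation to identify $\int_M|\nabla u|^{2}\dm=\lambda\geq 0$; observe $\varphi<0$; and thereby reduce $\lambda'\geq 0$ to the pointwise bound $R\geq 2\bigl(-\tfrac{\mathcal{A}}{\varphi}+\tfrac{r}{n}\bigr)$ on $M\times[0,T)$, to be obtained from \eqref{A12} and the maximum principle of Proposition \ref{A16}.

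The one place where you depart from the paper is your final paragraph, and it is to your credit: the ``delicate point'' you isolate is passed over in silence by the published proof, which simply asserts that $-\varphi R-2\mathcal{A}+\tfrac{2r\varphi}{n}\geq 0$ on $M\times[0,T)$ follows ``easily'' from \eqref{A12} and Proposition \ref{A16}. But that proposition, as stated and proved, propagates only \emph{constant} lower bounds $K$, whereas the bound needed at time $t$ is $2\bigl(-\tfrac{\mathcal{A}}{\varphi}+\tfrac{r_t}{n}\bigr)$, which moves with $t$ through the average scalar curvature. Exactly as you say, the step is immediate only when $r_t\equiv 0$ (unnormalised Ricci or Ricci--Bourguignon flow); in the normalised cases one must either show $r_t\leq r_0$ or run the maximum principle on the shifted quantity $R-\tfrac{2r}{n}$, which requires controlling $r'(t)$ (your side remark that $\int_M\dm$ is constant along the flow is correct, since $h=2\varphi(R-r)$ integrates to zero against $\dm$). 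The paper does neither. So your proposal is not missing anything that the published proof supplies; rather, you have located a genuine gap in the paper's own argument.
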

\begin{proof}
Applying Theorem \ref{T1} for $c=0$, and using relations \eqref{A11}, \eqref{AAV}, and \eqref{cvx} we compute 
\begin{eqnarray*}
\lambda'&=&\frac{2\lambda r\varphi}{n}-\lambda\varphi\int_{M}u^{2}R\,\dm+(\varphi-2\rho)\int_{M}R|\nabla u|^{2}\dm+2a\int_{M}Ric(\nabla u,\nabla u)\dm\\
&\stackrel{\eqref{A11}}{\geq}&\frac{2\lambda r\varphi}{n}-\lambda\varphi\int_{M}u^{2}R\dm-2\mathcal{A}\int_{M}|\nabla u|^{2}\dm\\
&\stackrel{\eqref{AAV}\&{\eqref{cvx}}}{=}&\frac{2\lambda r\varphi}{n}-\lambda\varphi\int_{M}u^{2}R\dm-\lambda\int_{M}2\mathcal{A}u^{2}\dm\\
&=&\lambda\int_{M}\bigg(-\varphi R-2\mathcal{A}+\frac{2r\varphi}{n}\bigg)u^{2}\dm.
\end{eqnarray*}
Observe now that $n>2$, $a\geq0$ and $\rho<\dfrac{a}{2(n-1)}$ imply $\varphi<0$. Considering the inequalities $\langle \mathfrak{T},g\rangle\geq 0 \ \hbox{in} \ [0,T)$ and $R\geq 2\left(\frac{-\mathcal{A}}{\varphi}+\frac{r}{n}\right) \ \hbox{in} \ M\times\{0\}$ we easily infer by Proposition \ref{A16} that
\begin{eqnarray*}
-\varphi R-2\mathcal{A}+\frac{2r\varphi}{n}\geq 0 \ \hbox{in} \ M\times[0,T),
\end{eqnarray*}
and the proof is complete.
\end{proof}
It will certainly be worth generalising the discussion above in the case of the more general operator $\mathbb{L}$ evolving with the flow $\mathcal{F}$, or even with \eqref{RBY}. However, we are only able at the present moment to do so by imposing further restrictions. In particular, an assumption on the drifting function $\eta$ will be necessary. Write $\mathcal{G}$ for the flow \eqref{RBY} with the following constraints

$$r=0,\, -\dfrac{1}{4}<c<0, \, a\geq0 \,\,\,\mathrm{and}\,\,\, \rho<\frac{a}{2(n-1)}.$$

In this case, the following theorem holds true.
 
%==================================================================
%              MONOTONICITY - MORE GENERAL
%=====================================================================

\begin{theorem}\label{monn}
Let $g(t)$, $t\in[0,T)$, be a solution to the flow $\mathcal{G}$ on a closed
manifold $M^{n}$, $n>2$. Assume that the drifting function $\eta$ satisfies $\Delta\eta\leq\dfrac{(\varphi-2\rho)}{\psi}R$ for all $t\in[0,T)$. If there exists a constant $\mathcal{A}\geq 0$ such that for all $t\in[0,T)$
$$\big[(\varphi-2\rho)-c\psi\big]Rg+2aRic\geq-\mathcal{A}g $$
and 
$$R\geq\frac{\mathcal{A}}{2c\psi-\varphi},$$ 
then the eigenvalues of $-\mathbb{L}$ are nondecreasing along this flow.
\end{theorem}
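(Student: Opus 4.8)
The plan is to apply the master evolution formula \eqref{FVRBY} from Theorem \ref{T1} directly, but now keeping the constant $c$ nonzero since we are dealing with the full operator $-\mathbb{L}$ rather than $-L$. The strategy mirrors the proof of Theorem \ref{T2}, with three extra complications to dispose of: the curvature-squared term $-2ac\int_M|Ric|^2u^2\dm$, the quadratic curvature term $-c(\varphi-2\rho)\int_MR^2u^2\dm$, and the Laplacian term $-c\psi\int_M(\Delta R)u^2\dm$. First I would set $r=0$ (a defining constraint of the flow $\mathcal{G}$), which kills the first term on the right of \eqref{FVRBY} and simplifies the curvature lower bound we are asked to verify. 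This leaves
\begin{equation*}
\lambda'=-\lambda\varphi\int_MRu^2\dm+(\varphi-2\rho)\int_MR|\nabla u|^2\dm+2a\int_M Ric(\nabla u,\nabla u)\dm-c\int_M\big[R^2(\varphi-2\rho)+2a|Ric|^2+\psi\Delta R\big]u^2\dm.
\end{equation*}

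Next I would combine the two gradient-type integrals using the pointwise lower bound $[(\varphi-2\rho)-c\psi]Rg+2aRic\geq-\mathcal{A}g$. The natural manoeuvre is to bound $(\varphi-2\rho)R|\nabla u|^2+2aRic(\nabla u,\nabla u)$ from below, but notice the hypothesis involves $(\varphi-2\rho)-c\psi$ rather than $(\varphi-2\rho)$; this mismatch is precisely why the $\Delta R$ term must be integrated by parts first. The key step is to handle $-c\psi\int_M(\Delta R)u^2\dm$: integrating by parts against the weighted measure $\dm=e^{-\eta}\dM$ converts $\int_M(\Delta R)u^2\dm$ into an expression involving $\int_M R\,L(u^2)\dm$ plus a term carrying $\Delta\eta$, since $\int_M(\Delta R)u^2e^{-\eta}\dM=\int_M R\,\Delta(u^2e^{-\eta})\dM$ after two integrations by parts, and $\Delta(e^{-\eta})$ produces the $\Delta\eta$ contribution. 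This is exactly where the hypothesis $\Delta\eta\leq\frac{(\varphi-2\rho)}{\psi}R$ enters: it guarantees that the leftover $\Delta\eta$-term combines favourably with the $(\varphi-2\rho)R$ terms so that, after using \eqref{AAV} and \eqref{cvx} to eliminate the resulting $L(u^2)$ contributions, the effective coefficient multiplying $R|\nabla u|^2$ becomes $(\varphi-2\rho)-c\psi$, matching the curvature assumption.

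Once the gradient terms are consolidated and bounded below by $-\mathcal{A}\int_M|\nabla u|^2\dm$, I would invoke \eqref{AAV} together with $\int_M L(u^2)\dm=0$ to replace $\int_M|\nabla u|^2\dm$ by $\lambda\int_Mu^2\dm+c\int_MRu^2\dm=\lambda+c\int_MRu^2\dm$, just as in Corollary \ref{cor1}. Collecting everything, the derivative $\lambda'$ should reduce to $\lambda\int_M(\text{curvature expression})u^2\dm$, and the remaining task is to show the curvature expression is nonnegative. Here the hypotheses $R\geq\frac{\mathcal{A}}{2c\psi-\varphi}$ (at $t=0$) plus a maximum-principle propagation via Proposition \ref{A16} should deliver the required sign, with the sign of $\varphi$ (negative, since $n>2$, $a\geq0$, $\rho<\frac{a}{2(n-1)}$) and the constraint $-\frac14<c<0$ being used to control the direction of the inequalities.

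The main obstacle I anticipate is the weighted integration by parts on the $\Delta R$ term and ensuring the $\Delta\eta$ contribution lands with the correct coefficient and sign. The appearance of the precise bound $\Delta\eta\leq\frac{(\varphi-2\rho)}{\psi}R$ strongly suggests the authors need the $\Delta\eta$-term to be dominated by a curvature term so that the net coefficient of $R|\nabla u|^2$ is exactly $(\varphi-2\rho)-c\psi$; getting the bookkeeping of signs right through this step, especially given that both $c$ and $\varphi$ are negative and $\psi=a-2(n-1)\rho>0$ under the flow $\mathcal{G}$, is where the delicacy lies. A secondary check is verifying that $2c\psi-\varphi>0$ so that the threshold $\frac{\mathcal{A}}{2c\psi-\varphi}$ is a genuine lower bound rather than an inverted inequality, which again hinges on the sign constraints on the parameters.
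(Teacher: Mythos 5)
Your proposal follows essentially the same route as the paper's appendix proof: starting from \eqref{FVRBY} with $r=0$, integrate the term $-c\psi\int_M u^2\Delta R\,\dm$ by parts twice against the weighted measure $\dm=e^{-\eta}\dM$ (producing the $\Delta\eta$ term, a completed square $R|\nabla u-u\nabla\eta|^2$, and shifting the coefficient of $\int_M R|\nabla u|^2\dm$ to $(\varphi-2\rho)-c\psi$), then conclude by the same sign analysis — the check $2c\psi-\varphi>0$, the curvature lower bound applied to the gradient terms, and \eqref{AAV}, \eqref{cvx} to convert $\int_M|\nabla u|^2\dm$ into $\lambda+c\int_MRu^2\dm$. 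One small correction: the hypothesis $R\geq\mathcal{A}/(2c\psi-\varphi)$ is assumed for \emph{all} $t\in[0,T)$, so no maximum-principle propagation is needed; your proposed appeal to Proposition \ref{A16} would in fact be unavailable here, since its hypothesis $\langle\mathfrak{T},g\rangle\geq0$ is not among the assumptions of this theorem.
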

For the sake of brevity of the exposition we shall present the proof of this theorem in the appendix.

%==================================================================
%              SOME APPLICATIONS
%=====================================================================

\section{An upper bound for the first variation of $\lambda(t)$}\label{appl}

In this section we shall discuss one immediate application of the preceding two sections. We shall  first derive a generalisation of the Reilly formula for the operator $\mathbb{L}$. Then, by dint of this latter formula, an upper bound for the first variation of the eigenvalue $\lambda(t)$ will be obtained. By the end of this section the following notation shall also be adopted. Given a manifold $M$ and a drifting function $\eta\in C^{\infty}(M)$ we define the symmetric $(0,2)$-tensor $$\mathcal{T}=Ric_{\eta}-\dfrac{cR}{2}g,$$ where $Ric_{\eta}=Ric+\nabla^{2}\eta$ is the Bakry-Emery-Ricci tensor.

\begin{lemma}\label{lem}
Given a closed manifold $M$ and a function $f\in C^{\infty}(M)$, the following formula holds true
\begin{eqnarray*}
 \int_{M}(\mathbb{L}f)^{2}\dm&=&\int_{M}\Big[\mathcal{T}(\nabla f,\nabla f)+|\nabla^{2}f|^{2}\Big]\dm +c\int_{M}R\bigg(\mathbb{L}(f^{2})-\frac{3}{2}|\nabla f|^{2}\bigg)\dm.
\end{eqnarray*}
\end{lemma}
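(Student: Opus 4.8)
The plan is to reduce everything to a weighted Bochner identity for the drifting Laplacian $L$ and then absorb the zeroth-order term $cR$ by purely algebraic bookkeeping. First I would establish the Bochner formula adapted to $L$, namely
\begin{equation*}
\tfrac{1}{2}L|\nabla f|^{2} = |\nabla^{2} f|^{2} + g(\nabla f,\nabla Lf) + Ric_{\eta}(\nabla f,\nabla f).
\end{equation*}
This is obtained from the classical Bochner formula $\frac{1}{2}\Delta|\nabla f|^{2} = |\nabla^{2} f|^{2} + g(\nabla f,\nabla\Delta f) + Ric(\nabla f,\nabla f)$ by subtracting $\frac{1}{2}g(\nabla\eta,\nabla|\nabla f|^{2})$ and rewriting $g(\nabla f,\nabla\Delta f)$ in terms of $g(\nabla f,\nabla Lf)$. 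Since $\frac{1}{2}\nabla|\nabla f|^{2} = \nabla^{2}f(\nabla f,\cdot)$, the two drift contributions — one from $\frac{1}{2}g(\nabla\eta,\nabla|\nabla f|^{2})$ and one from expanding $g(\nabla f,\nabla(g(\nabla\eta,\nabla f)))$ — both equal $\nabla^{2}f(\nabla\eta,\nabla f)$ and cancel, leaving exactly $\nabla^{2}\eta(\nabla f,\nabla f)$, which upgrades $Ric$ to the Bakry-Emery tensor $Ric_{\eta} = Ric + \nabla^{2}\eta$.

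Next I would integrate this identity against the weighted measure $\dm$. The left-hand side vanishes because $\int_{M}L\phi\,\dm = 0$ for every smooth $\phi$ (take $\ell=1$ in \eqref{int_pp}), and the term $\int_{M}g(\nabla f,\nabla Lf)\,\dm$ becomes $-\int_{M}(Lf)^{2}\,\dm$ after one integration by parts \eqref{int_pp} with $\ell = Lf$. This produces the Reilly-type identity for the Witten-Laplacian,
\begin{equation*}
\int_{M}(Lf)^{2}\,\dm = \int_{M}\Big[|\nabla^{2} f|^{2} + Ric_{\eta}(\nabla f,\nabla f)\Big]\dm,
\end{equation*}
which is the heart of the lemma.

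It then remains to reinstate the curvature term. Writing $\mathbb{L}f = Lf + cRf$ and squaring gives
\begin{equation*}
\int_{M}(\mathbb{L}f)^{2}\,\dm = \int_{M}(Lf)^{2}\,\dm + 2c\int_{M}RfLf\,\dm + c^{2}\int_{M}R^{2}f^{2}\,\dm.
\end{equation*}
I would substitute the Reilly-type identity for the first term and rewrite the cross term using the product rule \eqref{Lfg}, which gives $L(f^{2}) = 2fLf + 2|\nabla f|^{2}$ and hence $\mathbb{L}(f^{2}) = 2fLf + 2|\nabla f|^{2} + cRf^{2}$. Expressing $2c\,RfLf + c^{2}R^{2}f^{2}$ through $cR\,\mathbb{L}(f^{2})$ leaves a residual $-2c\,R|\nabla f|^{2}$, while replacing $Ric_{\eta}$ by $\mathcal{T} = Ric_{\eta} - \frac{cR}{2}g$ contributes a further $+\frac{c}{2}R|\nabla f|^{2}$. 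The two combine to $(\tfrac{1}{2}-2)\,cR|\nabla f|^{2} = -\tfrac{3}{2}cR|\nabla f|^{2}$, which is precisely the term appearing in the statement, and the lemma follows.

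The only genuinely delicate step is the derivation of the weighted Bochner formula and, in particular, tracking the drift terms carefully so that exactly $\nabla^{2}\eta$ survives; everything after that is bookkeeping. A minor point to keep honest is that all integrations by parts are performed against $\dm$ via \eqref{int_pp} rather than against the Riemannian volume $\dM$, which is exactly what makes $Ric_{\eta}$ (and not $Ric$) appear and what makes the boundary-free identity $\int_{M}L\phi\,\dm=0$ available on the closed manifold $M$.
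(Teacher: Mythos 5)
Your proof is correct, and it is organised differently from the one in the paper. The paper works with a Bochner-type formula for the full operator $\mathbb{L}$ from the start (its formula \eqref{BOC}), so the constant $c$ is carried through the pointwise computation; this inevitably produces the term $-\frac{c}{2}g(\nabla R,\nabla f^{2})$, which then has to be removed at the end by an integration by parts against $\dm$ involving $\nabla R$. You instead first isolate the clean weighted Reilly identity
\begin{equation*}
\int_{M}(Lf)^{2}\,\dm=\int_{M}\Big[|\nabla^{2}f|^{2}+Ric_{\eta}(\nabla f,\nabla f)\Big]\dm,
\end{equation*}
which is essentially the drifting-Laplacian Reilly formula of Ma--Du \cite{LiMa2}, and only afterwards reinstate the curvature term algebraically by expanding $(\mathbb{L}f)^{2}=(Lf+cRf)^{2}$ and using the product rule \eqref{Lfg}; your bookkeeping $-2cR|\nabla f|^{2}+\frac{c}{2}R|\nabla f|^{2}=-\frac{3}{2}cR|\nabla f|^{2}$ matches the statement exactly, and all your uses of \eqref{int_pp} (with $\ell=1$ and $\ell=Lf$) are legitimate on the closed manifold $M$. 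What your route buys is that the scalar curvature is never differentiated --- no $\nabla R$ terms ever appear --- so the argument is modular: a known identity for $L$ plus pure algebra. What the paper's route buys is the pointwise Bochner-type formula \eqref{BOC} for $\mathbb{L}$ itself, in which the tensor $\mathcal{T}=Ric_{\eta}-\frac{cR}{2}g$ arises naturally at the pointwise (rather than integrated) level, and which may be of independent use beyond the lemma.
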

\vspace{0.5cm}
\begin{remark}
For obvious reasons, this  formula will be henceforth referred to as the Reilly formula. 
\end{remark}
\begin{proof}
We shall first derive a Bochner-type formula for the operator $\mathbb{L}$. Our starting point will be the obvious identity
$$\dfrac{1}{2}\mathbb{L}|\nabla f|^{2}=\Delta\dfrac{1}{2}|\nabla f|^{2}-g\big(\nabla\eta,\nabla\dfrac{1}{2}|\nabla f|^{2}\big)+\dfrac{c}{2}R|\nabla f|^{2}.$$
Firstly, applying the usual Bochner formula for the standard Laplacian we can write
$$\dfrac{1}{2}\mathbb{L}|\nabla f|^{2}=|\nabla^2 f|^{2}+Ric(\nabla f,\nabla f)+g\big(\nabla f, \nabla (\Delta f)\big)-g\big(\nabla\eta,\nabla\dfrac{1}{2}|\nabla f|^{2}\big)+\dfrac{c}{2}R|\nabla f|^{2}.$$
Then, the replacement of  $\Delta f$ with $\mathbb{L}f+g(\nabla\eta,\nabla f)-cRf$ in the third summand gives
\begin{eqnarray*}
\frac{1}{2}\mathbb{L}|\nabla f|^{2}&=&g\big(\nabla f,\nabla(\mathbb{L}f)\big)+Ric(\nabla f,\nabla f)+|\nabla^{2}f|^{2}-\frac{1}{2}g\big(\nabla\eta,\nabla|\nabla f|^{2}\big)\\
&&-\frac{cR}{2}|\nabla f|^{2}+g\big(\nabla f,\nabla g(\nabla\eta,\nabla f)\big)-cfg(\nabla R,\nabla f).
\end{eqnarray*}
Finally, by means of the well-known relation $\nabla|\nabla f|^{2}=2\nabla_{\nabla f}\nabla f$, we obtain our Bochner-type formula to be
\begin{equation}\label{BOC}
\frac{1}{2}\mathbb{L}|\nabla f|^{2}=\mathcal{T}(\nabla f,\nabla f)+g\big(\nabla f,\nabla(\mathbb{L}f)\big)+|\nabla^{2}f|^{2}-\frac{c}{2}g\big(\nabla R,\nabla f^{2}\big).
\end{equation}
\newline\\
To complete the proof of the lemma we just need to integrate formula \eqref{BOC}. Before doing so, however, we shall have to make two crucial observations. Firstly,  by the definition of $\mathbb{L}$ and  relation \eqref{cvx} we can write
\begin{equation}\label{BBA}
\frac{1}{2}\int_{M}\mathbb{L}|\nabla f|^{2}\dm=\dfrac{c}{2}\int_{M}R|\nabla f|^{2}\dm.
\end{equation}
Secondly, we need to integrate the term $g\big(\nabla f,\nabla(\mathbb{L}f)\big)$. For this purpose we shall write
$$\big(\mathbb{L}f\big)^2=\big(\mathbb{L}f\big)(Lf+cRf)=\big(\mathbb{L}f\big)(Lf)+cRf\big(\mathbb{L}f\big).$$
Clearly, $\mathbb{L}f$ can be thought of as a function itself and thus we can apply our integration by parts formula \eqref{int_pp} to the term $\big(\mathbb{L}f\big)(Lf)$. Whence,
\begin{equation}\label{BBB}\int_{M}g\big(\nabla f,\nabla(\mathbb{L}f)\big)\dm=-\int_{M}(\mathbb{L}f)^{2}\dm+c\int_{M}Rf(\mathbb{L}f)\dm.
\end{equation}
Now, integrating the Bochner formula \eqref{BOC}, and using relations \eqref{BBA} and \eqref{BBB}   we can write
\begin{eqnarray}
\nonumber\int_{M}(\mathbb{L}f)^{2}\dm&=&\int_{M}\mathcal{T}(\nabla f,\nabla f)\dm+\int_{M}|\nabla^{2}f|^{2}\dm\\
\nonumber&&+\frac{c}{2}\int_{M}\bigg(2Rf(\mathbb{L}f)-R|\nabla f|^{2}-g(\nabla R,\nabla f^{2})\bigg)\dm.
\end{eqnarray}
 Writing $Rf\mathbb{L}f = RfLf+cR^2f^2$, and applying the integration by parts formula completes the proof.
  \end{proof}

Now, by exploiting Lemma \ref{lem} we establish an upper-bound estimate for  
$\dfrac{d}{dt}\lambda (t)$.
\begin{theorem}\label{T1A}
Let $(M,g)$ be a closed manifold and $\lambda (t)$ be the evolution of an eigenvalue of $-\mathbb{L}$ under the flow \eqref{RBY} with $a>0$. If the scalar curvature $R$ is constant along the flow, then
\begin{eqnarray*}
\nonumber\dfrac{d}{dt}\lambda(t)&\leq&2a\dfrac{(n-1)}{n}\lambda^{2}+\dfrac{2aR}{n}\big(2nc-2c-1\big)\lambda+2a\dfrac{(n-1)}{n}c^2R^2 \\\\
\nonumber&&-2ac\int_{M}u^2|Ric|^{2}\dm-\frac{2a}{n}\int_{M}g(\nabla\eta,\nabla u)^{2}\dm-2a\int_{M}\nabla^{2}\eta(\nabla u,\nabla u)\dm\\\\
&&+\frac{2a(\lambda+cR)}{n}\int_{M}g(\nabla\eta,\nabla u^{2})\dm
\end{eqnarray*}
in $M\times[0,T)$.
\end{theorem}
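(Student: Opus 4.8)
The plan is to start from the exact variation formula of Corollary \ref{cor1}, valid under the constant-$R$ assumption, namely
\[
\lambda'=-\frac{2a\lambda R}{n}+2a\int_{M}Ric(\nabla u,\nabla u)\dm-2ac\int_{M}u^{2}|Ric|^{2}\dm,
\]
and to control the middle term with the Reilly formula of Lemma \ref{lem}. The natural bridge is the tensor $\mathcal{T}=Ric_{\eta}-\frac{cR}{2}g$ with $Ric_{\eta}=Ric+\nabla^{2}\eta$, which rearranges to $Ric=\mathcal{T}-\nabla^{2}\eta+\frac{cR}{2}g$. Substituting this decomposition into the formula above and using $\int_{M}|\nabla u|^{2}\dm=\lambda+cR$ (established inside the proof of Corollary \ref{cor1}) isolates $2a\int_{M}\mathcal{T}(\nabla u,\nabla u)\dm$ together with the explicit term $-2a\int_{M}\nabla^{2}\eta(\nabla u,\nabla u)\dm$ and a multiple of $R(\lambda+cR)$ that will feed into the final bound.

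Next I would insert $f=u$ into Lemma \ref{lem}. Since $-\mathbb{L}u=\lambda u$ gives $(\mathbb{L}u)^{2}=\lambda^{2}u^{2}$ and $u$ is normalised, the left-hand side becomes $\lambda^{2}$; moreover the constancy of $R$ together with $\int_{M}L(u^{2})\dm=0$ from \eqref{cvx} collapses $\int_{M}R\,\mathbb{L}(u^{2})\dm$ and $\int_{M}R|\nabla u|^{2}\dm$ into the explicit scalars $cR^{2}$ and $R(\lambda+cR)$. This yields an \emph{exact} expression for $\int_{M}\mathcal{T}(\nabla u,\nabla u)\dm$ in which the only remaining non-elementary quantity is $\int_{M}|\nabla^{2}u|^{2}\dm$.

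The decisive step, and the only place an inequality enters, is to discard the trace-free part of the Hessian through the pointwise Cauchy--Schwarz bound $|\nabla^{2}u|^{2}\geq\frac{1}{n}(\Delta u)^{2}$; because this term appears with a negative sign in the expression for $\int_{M}\mathcal{T}(\nabla u,\nabla u)\dm$ and $a>0$, it produces precisely the required upper bound for $\lambda'$ and accounts for the factor $\frac{n-1}{n}$. I would then eliminate $\Delta u$ via the definition of $\mathbb{L}$, which under constant $R$ reads $\Delta u=-(\lambda+cR)u+g(\nabla\eta,\nabla u)$, so that after squaring and integrating
\[
\int_{M}(\Delta u)^{2}\dm=(\lambda+cR)^{2}-(\lambda+cR)\int_{M}g(\nabla\eta,\nabla u^{2})\dm+\int_{M}g(\nabla\eta,\nabla u)^{2}\dm,
\]
where I have used $2\int_{M}u\,g(\nabla\eta,\nabla u)\dm=\int_{M}g(\nabla\eta,\nabla u^{2})\dm$. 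Back-substitution reproduces the two drift terms $-\frac{2a}{n}\int_{M}g(\nabla\eta,\nabla u)^{2}\dm$ and $\frac{2a(\lambda+cR)}{n}\int_{M}g(\nabla\eta,\nabla u^{2})\dm$ in the statement.

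Finally I would collect the polynomial-in-$\lambda$ contributions: the coefficient of $\lambda^{2}$ assembles as $2a(1-\frac{1}{n})=2a\frac{n-1}{n}$, the coefficient of $\lambda$ as $\frac{2aR}{n}(2nc-2c-1)$, and the $\lambda$-free part as $2a\frac{n-1}{n}c^{2}R^{2}$, while the terms $-2ac\int_{M}u^{2}|Ric|^{2}\dm$ and $-2a\int_{M}\nabla^{2}\eta(\nabla u,\nabla u)\dm$ pass through untouched. I do not expect a genuine obstacle: all the conceptual content is already packaged in Corollary \ref{cor1}, the Reilly identity, and the single trace inequality, so the main risk is purely in the bookkeeping of the numerical coefficients generated by the successive substitutions.
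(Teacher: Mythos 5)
Your proposal is correct and follows essentially the same route as the paper's own proof: both combine Corollary \ref{cor1}, the Reilly formula of Lemma \ref{lem} applied to the eigenfunction $u$, and the trace inequality $n|\nabla^{2}u|^{2}\geq(\Delta u)^{2}$ with the identical expansion $\Delta u=-(\lambda+cR)u+g(\nabla\eta,\nabla u)$. The only difference is the direction of the bookkeeping --- the paper solves the Reilly identity for $\int_{M}|\nabla^{2}u|^{2}\dm$ (which carries the term $-\lambda'/2a$) and then bounds it from below, whereas you isolate $\lambda'$ and bound it from above --- which is an algebraically equivalent rearrangement of the same inequality.
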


\begin{proof}
Since the scalar curvature is constant, we can obviously write by Lemma \ref{lem} 
\begin{eqnarray*}
\int_{M}|\nabla^{2}u|^{2}\dm=\int_{M}(\mathbb{L}u)^{2}\dm-\int_{M}\mathcal{T}(\nabla u,\nabla u)\dm
-cR\int_{M}\bigg(\mathbb{L}u^{2}-\frac{3}{2}|\nabla u|^{2}\bigg)\dm.
\end{eqnarray*}
Recalling that $-\mathbb{L}u=\lambda u$ we immediately have that
 \begin{equation*}
 \int_{M}(\mathbb{L}u)^{2}\dm=\lambda^{2}. 
 \end{equation*}
 We also effortlessly compute the following two quantities
\begin{eqnarray*}
{\bf A)}\,\,\,\int_{M}\mathcal{T}(\nabla u,\nabla u)\dm&=&\int_{M}Ric(\nabla u,\nabla u)\dm+\int_{M}\nabla^{2}\eta\big(\nabla u,\nabla u\big)\dm\\
\nonumber&&-\frac{cR}{2}\int_{M}|\nabla u|^{2}\dm\\
&\stackrel{\eqref{A2}}{=}&\frac{\lambda'}{2a}+\frac{R\lambda}{n}+c\int_{M}u^2|Ric|^{2}\dm\\
\nonumber&&+\int_{M}\nabla^{2}\eta\big(\nabla u,\nabla u\big)\dm-\frac{cR}{2}\int_{M}|\nabla u|^{2}\dm.
\end{eqnarray*}

\begin{eqnarray*}
{\bf B)}\,\,\,cR\int_{M}\bigg(\mathbb{L}u^{2}-\frac{3}{2}|\nabla u|^{2}\bigg)\dm=-2\lambda cR+\frac{cR}{2}\int_{M}|\nabla u|^{2}\dm-c^{2}R^{2}.
\end{eqnarray*}
We have thus obtained the following formula for the integral of the square of the Hessian
\begin{eqnarray}\label{inthess}
\int_{M}|\nabla^{2}u|^{2}\dm&=&-\dfrac{\lambda'}{2a}+\lambda^2+\dfrac{\lambda R}{n} \Big(2nc-1\Big)+c^2R^2\\
&-&c\nonumber\int_{M}u^2|Ric|^{2}\dm-\int_{M}\nabla^{2}\eta\big(\nabla u,\nabla u\big)\dm.
\end{eqnarray}
It is well-known that the Cauchy-Schwarz inequality implies $n|\nabla^{2}u|^{2}\geq(\Delta u)^{2}$. We also have by definition that $\Delta u=\mathbb{L}u+g(\nabla\eta,\nabla u)-cRu$. Then, the following inequality emerges
\begin{eqnarray*}
\int_{M}|\nabla^{2}u|^{2}\dm\geq\frac{1}{n}\int_{M}(\Delta u)^{2}\dm=\frac{1}{n}\int_{M}\bigg(\mathbb{L}u+g(\nabla\eta,\nabla u)-cRu\bigg)^{2}\dm, 
\end{eqnarray*}
which can be expanded further as
\begin{eqnarray*}
\nonumber\int_{M}|\nabla^{2}u|^{2}\dm&\geq&\frac{1}{n}\int_{M}\bigg((\mathbb{L}u)^{2}+g(\nabla\eta,\nabla u)^{2}+c^{2}R^{2}u^{2}+2(\mathbb{L}u)g(\nabla\eta,\nabla u)\bigg)\dm\\
&&-\frac{1}{n}\int_{M}\bigg(2cRu\mathbb{L}u+2cRug(\nabla\eta,\nabla u)\bigg)\dm.
\end{eqnarray*}
It is now a matter of a direct calculation to perceive the truth of 
\begin{eqnarray*}
\nonumber\int_{M}|\nabla^{2}u|^{2}\dm\geq\dfrac{1}{n}(\lambda+cR)^{2}-\dfrac{1}{n}(\lambda+cR)\int_{M}g(\nabla\eta,\nabla u^{2})+\dfrac{1}{n}\int_{M}g(\nabla\eta,\nabla u)^{2}\dm.
\end{eqnarray*}
Now, the estimate for $\lambda'$ stated in the proposition follows immediately from this latter inequality and formula \eqref{inthess}.
\end{proof}

%=============================================================================
%                       ON THE SPECTRA OF HOMOTOPIC IMAGES OF GEOMETRIC OPERATORS
%=============================================================================
% As Henrique suggested, this should probably be another paper.
\section{Conjectural answer to Question \ref{qu1}}\label{homotopy}

We have already seen above that Theorems \ref{T1} and  \ref{T2} generalise most of the results already known in the literature. More importantly, the non-decreasing  property of the eigenvalues of given geometric operator evolving under some geometric flow have been always proven under certain hypotheses. In our opinion, this observations constitute enough empirical evidence to formulate the following conjectural answer of Question \ref{qu1}:
\newline\\
 { \it The monotonicity of the eigenvalues of the operators $\Delta+cR$, $L$ and $\mathbb{L}$ is inherited from the monotonicity of the eigenvalues of the Laplace - Beltrami operator $\Delta$.}  
\newline\\
Firslty, it is not difficult to observe that the eigenvalues of both $-\Delta$ and $-L$ obey exactly the same evolution formula under the flow \eqref{RBY}. Indeed, in both cases we have the following evolution formula
 
$$\dfrac{d}{dt} \lambda(t)=\dfrac{2\lambda r}{n}\varphi-\lambda\varphi\int_{M}u^{2}R\dm+(\varphi-2\rho)\int_{M}R|\nabla u|^{2}\dm+2a\int_{M}Ric(\nabla u,\nabla u)\dm.$$

Secondly, we can think of the operator $-L$ as a Schr\"{o}dinger operator with potential function
$V(x)=\langle\nabla\eta,\nabla\cdot\rangle$. 

With these remarks in mind we would like to propose the following conjectures.

%But then operators of the form $-\Delta+cR$ can be thought of as Schr\"{o}dinger operators, can't they? So, we basically have the following operators under consideration
%$$-\Delta, \,\,\,\, S=-\Delta+cR, \,\,\,\, L=-\Delta+\langle\nabla\eta,\nabla\cdot\rangle , \,\,\,\, \mathbb{L}=S+\langle\nabla\eta,\nabla\cdot\rangle.$$

%Bearing in mind the generalising nature of our results and our observation, it is natural to conjecture the following two theorems:
\begin{conj}
Let $S$ be a Schr\"{o}dinger operator evolving with a geometric flow $\mathcal{F}$. If the eigenvalues of $-\Delta$ are non-decreasing under the flow $\mathcal{F}$, then so are the eigenvalues of $S$. 
\end{conj}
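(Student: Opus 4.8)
The plan is to attack the statement through the first--variation formula for eigenvalues, in the same direct spirit as Proposition \ref{P1}, and deliberately avoid any entropy functional. Writing $S=-\Delta+V$ for the Schr\"{o}dinger operator and letting $u=u(x,t)$ be a normalised eigenfunction with $Su=\lambda u$, the $C^1$--differentiability guaranteed by the theorem of Kriegl and Michor legitimises differentiating $\lambda(t)=\int_M u\,Su\,\dm$ in time. Carrying out that differentiation and discarding the terms that vanish by normalisation, exactly as the quantity $K$ was shown to vanish in Proposition \ref{P1}, should yield a decomposition of the schematic form
\begin{equation*}
\lambda'=\lambda'_{\Delta}[u]+\int_{M}V'\,u^{2}\,\dm,
\end{equation*}
where $\lambda'_{\Delta}[u]$ is the integrand that arises for the bare Laplacian, but evaluated on the eigenfunction $u$ of $S$, and $V'$ is the time derivative of the potential induced by the flow $\mathcal{F}$.

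The first key step is to notice that the known monotonicity results for $-\Delta$, such as Theorem \ref{T2} with $c=0$, are never proven by appealing to a special feature of the Laplacian eigenfunctions; they are proven by establishing a \emph{pointwise tensorial curvature inequality}, for instance $a\,Ric+\tfrac{\varphi-2\rho}{2}Rg\geq-\mathcal{A}g$, together with a scalar--curvature lower bound preserved by the flow via Proposition \ref{A16}. The crucial consequence is that such an inequality forces the Laplacian integrand to be non--negative for \emph{every} admissible function, and in particular for the eigenfunction $u$ of $S$ rather than only for eigenfunctions of $-\Delta$. This is precisely the mechanism by which monotonicity can ``transfer'' between operators: the sign of the Laplacian part is dictated by the geometry of the flow and is insensitive to which operator supplied the eigenfunction.

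The second key step is to control the genuinely new term $\int_{M}V'u^{2}\,\dm$. For the motivating case $V=g(\nabla\eta,\nabla\cdot)$, that is $S=-L$, this is transparent: since $\eta$ is fixed in time, the weighted measure $\dm=e^{-\eta}\dM$ varies under the flow exactly as $\dM$ does, so $-L$ on $(M,\dm)$ plays precisely the r\^{o}le that $-\Delta$ plays on $(M,\dM)$, and the two operators share verbatim the evolution formula displayed just before the conjecture; here the potential contribution is absorbed entirely into the measure and the transfer is exact. I would therefore isolate, as the operative hypothesis that makes the conjecture provable, the requirement that the potential be \emph{geometric and flow--compatible}, meaning that $V'$ together with the measure--coupling terms assembles into a sign--definite integrand under the very curvature condition that drives the monotonicity of $-\Delta$.

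The hard part will be exactly this last point. For a completely arbitrary potential the statement is surely too strong, since a potential decaying rapidly in time would force the eigenvalues of $S$ downward irrespective of the behaviour of $-\Delta$; so the conjecture must implicitly restrict to potentials tied to the geometry of $\mathcal{F}$. The genuine obstacle is to pin down the precise compatibility condition on $V$---equivalently, to show that $\int_{M}V'u^{2}\,\dm$ is non--negative, or dominated by the Laplacian part, whenever the flow satisfies the tensorial inequality rendering the $-\Delta$ eigenvalues non--decreasing---and then to verify that this single condition is met simultaneously by the three operators $\Delta+cR$, $L$ and $\mathbb{L}$ addressed in Question \ref{qu1}. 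Securing such a condition would turn the heuristic ``inheritance'' picture above into a theorem.
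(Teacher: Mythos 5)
There is nothing in the paper to compare your argument against: the statement you were given is posed by the authors precisely as a \emph{conjecture}, supported only by empirical evidence (the fact that $-\Delta$ and $-L$ satisfy verbatim the same evolution formula under \eqref{RBY}, plus the reading of $-L$ as a Schr\"{o}dinger-type operator). So the only question is whether your argument settles it, and it does not: it is a research programme, not a proof. The decisive step --- showing that $\int_M V'u^2\,\dm$ is non-negative, or dominated by the Laplacian part, under the same curvature hypotheses that drive the monotonicity for $-\Delta$ --- is exactly the step you leave open in your final paragraph. Worse, your own observation that a potential decaying rapidly in time forces the eigenvalues of $S$ downward is in effect a counterexample scheme against the conjecture \emph{as stated} (it quantifies over all Schr\"{o}dinger operators, with no compatibility restriction on $V$), so what you have written is closer to a refutation-plus-reformulation than to a proof of the given statement. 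That is a legitimate and even valuable observation, but it should be stated as such, not folded into a proof attempt.

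There is also a concrete technical flaw in your ``first key step.'' It is not true that the paper's monotonicity mechanism for the Laplacian part is ``insensitive to which operator supplied the eigenfunction.'' In the proof of Theorem \ref{T2}, the curvature inequality \eqref{A11} only yields
\begin{equation*}
\lambda'\;\geq\;\frac{2\lambda r\varphi}{n}-\lambda\varphi\int_{M}u^{2}R\,\dm-2\mathcal{A}\int_{M}|\nabla u|^{2}\dm,
\end{equation*}
and the conclusion requires converting $\int_{M}|\nabla u|^{2}\dm$ into $\lambda$; this conversion is performed via \eqref{AAV} and \eqref{cvx} and uses the eigenvalue equation of the specific operator. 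For a normalised eigenfunction of $S=-\Delta+V$ one has instead $\int_{M}|\nabla u|^{2}\dm=\lambda-\int_{M}Vu^{2}\dm$, which reintroduces potential-dependent terms of no definite sign; moreover the factor $\lambda$ multiplying $\int_M u^2R\,\dm$ in the evolution formula is then the eigenvalue of $S$, not of $-\Delta$. So the ``transfer'' of positivity is not automatic even before one confronts the $\int_M V'u^2\,\dm$ term. Finally, note that the motivating example $V=g(\nabla\eta,\nabla\cdot)$ is a first-order drift, not a multiplication operator, so $-L$ is not literally a Schr\"{o}dinger operator; making that identification honest requires the unitary (ground-state) transformation, which is the content of the paper's \emph{second} conjecture and would have to be argued, not assumed.
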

Let us recall further, that {\it unitarily equivalent} operators have by definition the same spectra. Therefore, it is also natural to expect that the following is true.
\begin{conj}
Suppose that the eigenvalues of a Schr\"{o}dinger operator $S$ are non-decreasing under a geometric flow $\mathcal{F}$. Then, the eigenvalues of any unitarily equivalent operator to $S$ are also non-decreasing under the flow $\mathcal{F}$. 
\end{conj}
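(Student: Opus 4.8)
The plan is to reduce the statement to the elementary but decisive fact, already recalled by the authors, that \emph{unitarily equivalent operators share the same spectrum}. First I would fix the meaning of the hypothesis in the present time-dependent setting. We are given a geometric flow $\mathcal{F}$ under which the operator $S=S(t)$ evolves; labelling its eigenvalues in non-decreasing order as $\lambda_1(t)\leq\lambda_2(t)\leq\cdots$, we assume each $\lambda_k(t)$ is a non-decreasing function of $t$. By an operator unitarily equivalent to $S$ I understand a family $\widetilde{S}(t)$ for which, at each frozen time $t\in[0,T)$, there is a unitary $U(t)$ with $\widetilde{S}(t)=U(t)\,S(t)\,U(t)^{-1}$, where $U(t)$ may depend on $t$ and may intertwine two \emph{different} Hilbert spaces. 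This last possibility is exactly the one realised in our context: the Witten--Laplacian $-L$ on $L^2(M,\dm)$ is carried by the multiplication operator $f\mapsto e^{-\eta/2}f$ onto a genuine Schr\"{o}dinger operator on $L^2(M,\dM)$, and the weighted measure $\dm=e^{-\eta}\dM$ evolves along the flow together with $\dM$.

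The key step is purely spectral and is performed at each fixed $t$. If $S(t)u=\mu u$, then $\widetilde{S}(t)\big(U(t)u\big)=U(t)S(t)u=\mu\,U(t)u$, so $U(t)$, being a bijective isometry, carries each eigenspace of $S(t)$ isomorphically onto the corresponding eigenspace of $\widetilde{S}(t)$ and preserves multiplicities. Hence the two operators are isospectral for every $t$, and the ordered eigenvalue sequences coincide identically: $\widetilde{\lambda}_k(t)=\lambda_k(t)$ for all $k$ and all $t$.

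The conclusion is then immediate, since $\widetilde{\lambda}_k$ is literally the same function of $t$ as $\lambda_k$ and therefore inherits the non-decreasing property. Differentiability of these curves, should one wish to phrase monotonicity infinitesimally, is supplied for Schr\"{o}dinger operators by part $(B)$ of the Kriegl--Michor theorem quoted in Section \ref{prem}; but it is not strictly needed, as monotonicity of the ordered eigenvalues can be stated without derivatives.

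The main obstacle is thus not the monotonicity transfer, which is automatic, but the careful setting up of the functional-analytic framework, precisely because the ambient Hilbert space $L^2(M,\dm)$ itself changes with $t$ as the measure evolves. One must verify that the conjugating family $U(t)$ intertwines $S(t)$ and $\widetilde{S}(t)$ at each instant, and that the labelling of eigenvalues chosen for $S$ matches that chosen for $\widetilde{S}$. The genuinely delicate case would be one in which the equivalence is only asserted asymptotically, or via a single $t$-independent unitary acting on a fixed space while the operators live on evolving ones; once honest unitary equivalence is granted at every frozen $t$, isospectrality at that $t$ forces the two eigenvalue curves to coincide and the result follows.
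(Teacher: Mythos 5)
The first thing to be said is that the paper contains no proof of this statement: it is the second of the two conjectures offered in Section \ref{homotopy} as a conjectural answer to Question \ref{qu1}, and it is left entirely open. So there is no proof of the authors' to compare yours against; the only question is whether your argument actually settles the conjecture, and I do not believe it does.

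The gap lies in the interpretation you fix at the outset. You take ``unitarily equivalent operator to $S$'' to mean a family $\widetilde S(t)$ such that at \emph{every} time $t$ there is a unitary $U(t)$ with $\widetilde S(t)=U(t)S(t)U(t)^{-1}$. Under that reading the statement is indeed an immediate consequence of isospectrality at each frozen $t$, exactly as you argue --- but then it is not a conjecture at all: it is a tautology, and moreover it is precisely the remark the authors themselves make in the sentence immediately preceding the statement (``unitarily equivalent operators have by definition the same spectra''), which they offer as \emph{motivation} (``it is natural to expect''), not as a proof. The content they intend must therefore be the non-trivial reading: the equivalence is given at one instant, or structurally (e.g. $-L$ versus its ground-state transform $-\Delta+\frac14|\nabla\eta|^{2}-\frac12\Delta\eta$), each operator then evolves under the flow by its own structural law --- its coefficients are rebuilt from $g(t)$ at each time --- and one must show that monotonicity transfers. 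In that setting unitary equivalence at $t=0$ need not persist for $t>0$; the isospectrality of the two evolving families is exactly what has to be proved, and your argument does not address it. Indeed, your closing paragraph concedes that this is ``the genuinely delicate case'' and then excludes it by definition rather than by proof. Even for the one concrete example you invoke, the Witten--Laplacian, persistence of the equivalence along the flow is a fact requiring verification at each $t$ (the conjugating multiplication operator is time-independent, but the two Hilbert spaces, the potential $\frac14|\nabla\eta|_{g(t)}^{2}-\frac12\Delta_{g(t)}\eta$, and both operators all move with $g(t)$); that verification --- let alone one valid for \emph{any} unitarily equivalent operator, which in general carries no canonical prescription for how it should ``evolve under the flow'' --- is missing, and it is exactly where the conjecture stops being a restatement of isospectrality and starts being a genuine open problem.
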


%The third conjecture we would like to propose would perhaps slightly change the perspective. It is not difficult to observe that the operators   $\Delta$, $\Delta -g(\nabla \eta, \nabla)$, $\Delta +cR$ and $\Delta-g(\nabla\eta,\nabla)+cR$ are homotopic as maps. It is an interesting question then if the monotonicity is preserved by homotopy. If in the affirmative, this would give a very neat theoretical explanation of our results. More precisely, we ask if the following statement can be true.
%\begin{conj}
%Consider two operators $f$ and $g$ evolving with the geometric flow \eqref{RBY}. Let $-fu=\lambda u$ and $-gv=\mu v$. If $f$ is homotopic to  $g$, and $\dfrac{d}{dt}\lambda(t)\geq 0$, then $\dfrac{d}{dt}\mu(t)\geq 0$. 
%\end{conj}

%Can these results be naturally extended further to the case of Dirac operators?

%=============================================================================
%                                                 CONCLUDING REMARKS
%=============================================================================

\section{Few Concluding Remarks}\label{concl}
To finish, we should like to pose two questions which we think are both interesting and important, and we are not aware of their answers. 
We have assumed throughout this article that equation \eqref{RBY} admitted unique short-time solutions. However, we are aware of the fact that this is not the case for arbitrary values of the parameters $a$ and $\rho$. For instance, it is well-known that the equation 
\begin{equation*}
\dfrac{\partial g}{\partial t}=-Ric+\dfrac{R}{2}g
\end{equation*}  
turns out to behave badly from a PDE viewpoint (see \cite{top}). Therefore, the following question naturally arises.
\begin{qu}
For which values of the constants $a$ and $\rho$ the flow
\begin{equation*}
\frac{\partial g_{t}}{\partial t}=-2aRic_{t}+2\left(\rho R_{t}-\frac{\varphi}{n}r_{t}\right)g_{t},
\end{equation*}
admits unique short-time solutions for arbitrary initial data?
\end{qu}
The second natural question is regarding the monotonicity.
\begin{qu}
Given a geometric operator evolving with a geometric flow, are there any obstructions for the monotonicity of its eigenvalues? If so, are these obstructions a manifestation of the flow, or they are rather a consequence of the topology of the manifold? 
\end{qu}

%=============================================================================
%                                      APPENDIX: THE ALTERNATIVE PROOF 
%=============================================================================

\section{Appendix}\label{appendix}
\subsection{An alternative proof of the evolution formula}
As promised earlier in Section \ref{evol}, we shall now give an alternative proof of the evolution formula for the eigenvalues of the operator $-\mathbb{L}$ evolving with the flow \eqref{RBY}. This proof conceptually differs from the one we have already presented, for it makes use of the notion of the {\it $\eta$-divergence} and some of its properties. Given  a symmetric $(0,r)$-tensor  $S$ on $M$  we define the {\it $\eta$-divergence} to be the $(0,r-1)$-tensor 
\begin{eqnarray*}
\mathrm{div}_{\eta}S=\mathrm{div} S-\mathrm{d}\eta\circ S,
\end{eqnarray*}
where $\mathrm{div}S$ is the usual divergence of S and $\eta$ is the drifting function. The identity $\mathrm{div}_{\eta}(\nabla f)=L(f)$ is effortlessly checked. The $\eta$-divergence enjoys the property $\mathrm{div}(e^{-\eta}X)=e^{-\eta}\mathrm{div}_{\eta}X$ for any smooth function $f$ and a vector field $X$ on the manifold $M$. It is by its virtue that we have a natural extension of the divergence theorem for $\mathrm{div}_{\eta}$. Indeed, given a vector field $X$ on $M$ and the weighted measure $\mathrm{d}\mu=e^{-\eta}\mathrm{d}\partial M$ on the boundary $\partial M$, the divergence theorem takes the form 
$$\int_M\mathrm{div}_{\eta}X \dm=\int_{\partial M}g(X,\nu)\mathrm{d}\mu,$$
where $\nu$ is the unit outward normal vector. We shall also need in what follows the  property
$$\mathrm{div}_{\eta}(fX)=f\mathrm{div}_{\eta}X+g(\nabla f, X).$$

Taking any function $f\in C_{0}^{\infty}(M)$ and working in coordinates one can straightforwardly verify the identity 
\begin{equation}\label{derL}
\frac{d}{dt}\bigg(\mathbb{L}f\bigg)=\mathbb{L}'f+\mathbb{L}f'.
 \end{equation}
 Moreover, the following formula holds true (see \cite{GMR}) 
  \begin{eqnarray}\label{A7}
\mathbb{L}'f=\bigg\langle\frac{1}{2}\mathrm{d}h-\mathrm{div}_{\eta}\h,\mathrm{d}f\bigg\rangle-\langle \h,\nabla^2f\rangle+cR'f.
\end{eqnarray}

Now, with the aforementioned preparatory remarks in mind we can give the second prove of the evolution formula \eqref{FVL}. 
\begin{proof}
To begin with, observe that by differentiating in $t$  the obvious identity $$-u(t)\mathbb{L}u(t)=\lambda(t)u^2(t),$$ we easily obtain
\begin{eqnarray*}
u'\mathbb{L}u-u\mathbb{L}u'=\lambda'u^{2}+u\mathbb{L}'u,
\end{eqnarray*}
which reduces to
\begin{eqnarray*}
u'Lu-uLu'=
\lambda' u^{2}+u\mathbb{L}'u.
\end{eqnarray*}
By dint of formula \eqref{int_pp} we readily perceive that the left hand side vanishes after the integration and therefore the following simple formula holds true 
\begin{eqnarray}\label{A9}
\lambda'=-\int_{M}u\mathbb{L}'u\,\dm.
\end{eqnarray}
It is now evident that we only need to compute the integral on the right hand side. One can calculate this integral in two different ways. The obvious one is to use the variation formula for the operator $\mathbb{L}$. Namely, plugging \eqref{A7} in \eqref{A9} and using the properties of $\mathrm{div}_{\eta}$ as well as the divergence theorem one will eventually arrive at the desired formula. Leaving this computation for the reader to check we shall give another way of computing the integral in \eqref{A9}. We observe first that the definition of $\mathbb{L}$  and the integration by part formula yield 
\begin{equation}\label{B}\int_{M}u\mathbb{L}u\,\dm=-\int_{M}\langle \mathrm{d}u,\mathrm{d}u\rangle \dm+c\int_{M}Ru^{2}\dm.\end{equation}
Evidently, we only need to differentiate the latter formula and simplify to formula \eqref{FVL}. Using the identity \eqref{derL} we can rewrite the left hand side integral of \eqref{B} as
\begin{equation*}%\label{M}
 \dfrac{d}{dt}\left(\int_{M}u\mathbb{L}u\,\dm\right)=\int_{M}u'(\mathbb{L}u)\dm+\int_{M}u\mathbb{L}'u\,\dm+\int_{M}u\mathbb{L}u'\,\dm+\frac{1}{2}\int_{M}hu\mathbb{L}u\,\dm.
\end{equation*}
Differentiation of the right hand side of \eqref{B} gives
\begin{eqnarray*}%\label{L}
\nonumber\dfrac{d}{dt}\left(\int_{M}u\mathbb{L}u\,\dm\right)&=&-2\int_{M}\langle \mathrm{d}u,\mathrm{d}u'\rangle \dm+\int_{M}\mathcal{H}(\nabla u,\nabla u)\dm-\frac{1}{2}\int_{M}h|\nabla u|^{2}\dm\\
&&+c\int_{M}R'u^{2}\dm+c\int_{M}R(u^{2})'\dm+\frac{c}{2}\int_{M}hRu^{2}\dm.
\end{eqnarray*}
It is also easily seen that the integration by parts formula implies
\begin{eqnarray*}
\nonumber\int_{M}u'\mathbb{L}u\,\dm+\int_{M}u\mathbb{L}u'\dm&=&\int_{M}u'Lu\,\dm+\int_{M}uLu'\,\dm+c\int_{M}R(u^{2})'\dm\\
&=&-2\int_{M}\langle \mathrm{d}u,\mathrm{d}u'\rangle \dm+c\int_{M}R(u^{2})'\dm.
\end{eqnarray*}
Thus, the latter three formulae  justify the validity of
\begin{eqnarray}\label{E}
\nonumber\int_{M}u\mathbb{L'}u\,\dm&=&\int_{M}\mathcal{H}(\nabla u,\nabla u)\dm-\frac{1}{2}\int_{M}h\langle \mathrm{d}u,\mathrm{d}u\rangle \dm\\
&&+c\int_{M}R'u^{2}\dm+\frac{c}{2}\int_{M}Rhu^{2}\dm-\frac{1}{2}\int_{M}hu\mathbb{L}u\,\dm.
\end{eqnarray}

We shall have to make a little detour at this point. Using the properties of $\mathrm{div}_{\eta}$ we readily compute
\begin{eqnarray*}
\nonumber \mathrm{div}_{\eta}(hu\nabla u)&=&huLu+g(\nabla hu,\nabla u)\\
\nonumber&=&huLu+h\langle  \mathrm{d}u, \mathrm{d}u\rangle+u\langle \mathrm{d}h, \mathrm{d}u\rangle\\
\nonumber&=&hu(\mathbb{L}u-cRu)+h\langle  \mathrm{d}u, \mathrm{d}u\rangle+u\langle \mathrm{d}h, \mathrm{d}u\rangle\\
&=&hu\mathbb{L}u-cRhu^{2}+h\langle  \mathrm{d}u, \mathrm{d}u\rangle+u\langle \mathrm{d}h, \mathrm{d}u\rangle.
\end{eqnarray*}
Now, as we are on a closed manifold, $\int_M\mathrm{div}_{\eta}(hu\nabla u) \dm$ vanishes and we obtain
$$
\int_{M}h\langle  \mathrm{d}u, \mathrm{d}u\rangle \dm=-\int_{M}u\langle \mathrm{d}h, \mathrm{d}u\rangle \dm-\int_{M}hu\mathbb{L}u\,\dm+c\int_{M}Rhu^{2}\dm.
$$
It follows that relation \eqref{E} can be expressed as
$$
\int_{M}u\mathbb{L}'u\,\dm=\int_{M}\mathcal{H}(\nabla u,\nabla u)\dm+\frac{1}{2}\int_{M}u\langle \mathrm{d}h, \mathrm{d}u\rangle \dm+c\int_{M}R'u^{2}\dm.
$$
Recall now that the Witten-Laplacian  can be defined as $\mathrm{div}_{\eta}(\nabla f)=L(f)$. This implies the identity
\begin{equation*}%\label{I}
\frac{u}{2}\langle \mathrm{d}h, \mathrm{d}u\rangle=\frac{1}{4}\mathrm{div}_{\eta}(h\nabla u^{2})-\frac{1}{4}hL(u^{2}),
\end{equation*}
which, by dint of the divergence theorem, can be rewritten in the following integral form
$$\int_M\frac{u}{2}\langle \mathrm{d}h, \mathrm{d}u\rangle \dm=-\int_M\frac{1}{4}hL(u^{2}) \dm.$$

We can then write
$$
\int_{M}u\mathbb{L}'u\,\dm=\int_{M}\left(-\frac{h}{4}L(u^{2})+\mathcal{H}(\nabla u,\nabla u)+cR'u^{2}\right)\dm,
$$
which completes the proof.

\end{proof}

%=============================================================================
%                                      APPENDIX: PROOF MONOTONICITY (GENERAL)
%=============================================================================

\subsection{Proof of Theorem \ref{monn}}
In order to prove this theorem we shall need to rewrite formula \eqref{FVRBY} from Theorem \ref{T1}. Without much effort one can justify that
\newline\\
\begin{eqnarray*}
\dfrac{d\lambda}{dt}&=&\lambda \int_{M}(2c\psi-\varphi)Ru^{2}\dm+c\big[2c\psi-(\varphi-2\rho)\big]\int_{M}R^{2}u^{2}\dm
\nonumber-2ac\int_{M}|Ric|^{2}u^{2}\dm\\\\&&-\big[c\psi-(\varphi-2\rho)\big]\int_{M}R|\nabla u|^{2}\dm+2a\int_{M}Ric(\nabla u,\nabla u)\dm\\\\
&&+c\psi\int_{M}Ru^{2}\Delta\eta\, \dm-c\psi\int_{M}R|\nabla u-u\nabla\eta|^{2}\dm.
\end{eqnarray*}
\newline\\
Notice that it was assumed here that $r=0$. If $r\neq 0$, then additional term $\dfrac{2\lambda r\varphi}{n}$ will appear. Notice that this formula can also be very handy in order to check that Theorem \ref{T1} indeed generalises all the evolution formulas appearing in the works cited in the bibleography.

Now, we are in a position to prove that $\dfrac{d \lambda}{dt}\geq0$. The proof consists of the following six steps.
\newline\\
{\bf STEP1.}
The hypotheses $n>2, \ -\frac{1}{4}<c<0, \ \rho<\dfrac{a}{2(n-1)}$ imply the crucial inequality  $2c\psi-\varphi>0.$ To perceive the truth of the latter we first observe that the assumption $-\frac{1}{4}<c<0$ readily implies the following inequality
\newline
$$\frac{n}{2c+1}>\frac{2}{2c+1}.$$
\newline
We then have the following sequence of inequalities
\newline
$$\left[2-\left(\frac{4c+1}{2c+1}\right)\right]n>2-\frac{4c}{2c+1};$$ \\
$$2n-2>\left(\frac{4c+1}{2c+1}\right)n-\frac{4c}{2c+1};$$\\
$$2(n-1)>\frac{1}{2c+1}[4c(n-1)+n];$$\\
$$\rho<\frac{a}{2(n-1)}<\frac{a(2c+1)}{4c(n-1)+n};$$\\
$$\big[4c(n-1)+n\big]\rho<a(2c+1).$$\\ 
Now, $2c\psi-\varphi>0$ follows immediately as $\varphi=-a+n\rho$ and $\psi=a-2(n-1)\rho$.
\newline\\
{\bf STEP2.} Clearly, $\mathcal{A}\geq 0$ and $R\geq\displaystyle\frac{\mathcal{A}}{2c\psi-\varphi}$ imply
\begin{equation}\label{V}
\int_{M}[(2c\psi-\varphi)R-\mathcal{A}]u^{2}dm\geq 0.
\end{equation}
\newline\\
{\bf STEP3.} Notice that $R\geq\displaystyle\frac{\mathcal{A}}{2c\psi-\varphi}$ and STEP1 imply
$R\geq 0$. The assumption $\rho<\dfrac{a}{2(n-1)}$ implies $\psi>0$. Hence,
\begin{equation}\label{W}
-c\psi\int_{M}R|\nabla u-u\nabla\eta|^{2}dm\geq 0.
\end{equation}
\newline\\
{\bf STEP4.}  The following estimate holds true for $a\geq 0$ and $c<0$
\begin{equation}\label{X}
-2ac\int_{M}|Ric|^{2}u^{2}\dm\geq 0.
\end{equation}
\newline\\
{\bf STEP5.} The hypotheses $\big[(\varphi-2\rho)-c\psi\big]Rg+2aRic\geq-\mathcal{A}g$ and
$\psi>0$ along with the equations \eqref{AAV} and \eqref{cvx} yield
\begin{equation}\label{Y}
-[c\psi-(\varphi-2\rho)]\int_{M}R|\nabla u|^{2}\dm+2a\int_{M}Ric(\nabla u,\nabla u)\dm\geq \lambda\int_{M}-\mathcal{A}u^{2}\dm.
\end{equation}
\newline\\
{\bf STEP6.}
In the final step we shall use the hypotheses on the drifting function $\eta$. Recall that $c<0$, $R\geq 0$ and $\psi>0$. These latter along with $\Delta\eta\leq\dfrac{(\varphi-2\rho)R}{\psi}$ imply the following sequence of inequalities
\begin{eqnarray*}
&&\psi\Delta\eta\leq(\varphi-2\rho)R\\
&&\psi\Delta\eta\leq[(\varphi-2\rho)-2c\psi]R\\
&&c\psi\Delta\eta\geq c[(\varphi-2\rho)-2c\psi]R\\
&&c\psi R\Delta\eta\geq c[(\varphi-2\rho)-2c\psi]R^{2}\\
&&c\psi R\Delta\eta\geq -c[2c\psi-(\varphi-2\rho)]R^{2}\\
&&c\psi Ru^{2}\Delta\eta\geq -c[2c\psi-(\varphi-2\rho)]R^{2}u^{2},
\end{eqnarray*}
which readily imply
\begin{equation}\label{AA}
c\psi\int_{M}Ru^{2}\Delta\eta \,\dm+c[2c\psi-(\varphi-2\rho)]\int_{M}R^{2}u^{2}\dm\geq 0.
\end{equation}
The desired monotonicity of $\lambda$ is now evident.

%\newpage
%==============================================================================
%                                                        BIBLIOGRAPHY
%==============================================================================

%\bibliographystyle{plain}
%\bibliography{references}
%\end{document}

%\newpage

\end{document}